\newtheorem{theorem}{\bf Theorem}[section]
\newtheorem{lemma}[theorem]{Lemma}
\newtheorem{corollary}[theorem]{Corollary}
\newtheorem{conjecture}[theorem]{Conjecture}
\newtheorem{claim}[theorem]{Claim}
\begin{document}
\title{\bf\Large MaxCut in graphs with sparse neighborhoods}
\date{}
\author{Jinghua Deng\footnote{Email: jinghua\_deng@163.com}, ~Jianfeng Hou\footnote{Research partially supported by NSFC (Grant No. 12071077). Email: jfhou@fzu.edu.cn}, ~Siwei Lin\footnote{Email: linsw0710@163.com}, ~Qinghou Zeng\footnote{Research partially supported by NSFC (Grant No. 12001106), and National Natural Science Foundation of Fujian Province (Grant No. 2021J05128). Email: zengqh@fzu.edu.cn (Corresponding author)}\\
{\small Center for Discrete Mathematics, Fuzhou University, Fujian, 350003, China}}

\maketitle
\begin{abstract}
Let $G$ be a graph with $m$ edges and let $\mathrm{mc}(G)$ denote the size of a largest cut of $G$. The difference $\mathrm{mc}(G)-m/2$ is called the  surplus $\mathrm{sp}(G)$ of $G$. A fundamental problem in MaxCut is to determine $\mathrm{sp}(G)$ for $G$ without specific structure, and the degree sequence $d_1,\ldots,d_n$ of $G$ plays a key role in getting   lower bounds of  $\mathrm{sp}(G)$. A classical example, given by Shearer, is  that $\mathrm{sp}(G)=\Omega(\sum_{i=1}^n\sqrt d_i)$ for triangle-free graphs $G$, implying that $\mathrm{sp}(G)=\Omega(m^{3/4})$. It was extended to graphs with sparse neighborhoods by  Alon, Krivelevich and Sudakov. In this paper, we establish a novel and stronger result for a more general family of graphs with sparse neighborhoods.

Our result can  derive many  well-known  bounds on surplus of $H$-free graphs for different $H$, such as  triangles,  even cycles,  graphs having a  vertex whose removal makes them acyclic, or  complete bipartite graphs $K_{s,t}$ with $s\in \{2,3\}$. It can also deduce many new  (tight) bounds on $\mathrm{sp}(G)$ in $H$-free graphs $G$ when $H$ is any graph having a  vertex whose removal results in a  bipartite graph with relatively small Tur\'{a}n number, especially the even wheel. This contributes to a conjecture raised by Alon, Krivelevich and Sudakov. Moreover, we obtain new families of graphs $H$ such that $\mathrm{sp}(G)=\Omega(m^{3/4+\epsilon(H)})$ for some constant $\epsilon(H)>0$ in $H$-free graphs $G$, giving evidences to a conjecture suggested by Alon, Bollob\'as, Krivelevich and Sudakov.

\medskip

\textbf{2020 Mathematics Subject Classification}: primary 05C35; secondary 05C38

\textbf{Keywords}: MaxCut, semidefinite programming, $H$-free graph, bipartite graph
\end{abstract}
\section{Introduction}\label{SEC:Introduction}
The celebrated \emph{MaxCut} problem is to determine the number of edges, denoted by $\mathrm{mc}(G)$, in a largest bipartite subgraph of a graph $G$. It is a central problem in discrete mathematics and theoretical computer science, and has received a lot of attention in the last 50 years, both from an algorithmic perspective in theoretical computer science and from an extremal perspective in combinatorics.

Using a probabilistic argument or a greedy algorithm, it is easy to show that $\mathrm{mc}(G)\ge m/2$ for every graph $G$ with $m$ edges.  In 1967, Erd\H{o}s \cite{Erd1967} demonstrated that the factor 1/2 cannot be improved in general, even for graphs containing no short cycles. Therefore, a natural problem is to estimate the error term $\mathrm{mc}(G)-m/2$ of a graph $G$, which we call \emph{surplus} and denote by $\mathrm{sp}(G)$. A classical result of Edwards \cite{E1973, Ed1975} asserts that every graph $G$ with $m$ edges has surplus
\[
\mathrm{sp}(G)\ge\frac{\sqrt{8m+1}-1}{8},
\]
and this is tight for complete graphs with an odd number of vertices.

Although the bound $\Omega(\sqrt m)$ on the surplus is optimal in general, it can be significantly improved for $H$-\emph{free} graphs, that is, graphs containing no copy of a fixed graph $H$. Given a positive integer $m$ and a fixed graph $H$, let $\mathrm{sp}(m,H)$ denote the smallest possible value of $\mathrm{sp}(G)$ over all $H$-free graphs $G$ with $m$ edges. The study of the surplus of $H$-free graphs was initiated by Erd\H{o}s and Lov\'asz in the 70s, and has received significant attention since then.  Alon, Bollob\'as,  Krivelevich and Sudakov \cite{ABKS2003} suggested the following general conjecture.
\begin{conjecture}[Alon, Bollob\'as, Krivelevich and Sudakov \cite{ABKS2003}]\label{General-Graph}
For any fixed graph $H$, there is a constant $\epsilon(H)>0$ such that
\[
\mathrm{sp}(m,H)=\Omega(m^{3/4+\epsilon(H)}).
\]
\end{conjecture}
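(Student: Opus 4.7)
Since the conjecture is a single statement covering every fixed $H$, my plan is to split into cases according to the chromatic number of $H$ and to funnel each case into the paper's sparse-neighborhood framework. The guiding principle is that forbidding $H$ forces some form of local sparsity which, when fed into a Shearer/SDP-style lower bound on $\mathrm{sp}(G)$, amplifies the trivial exponent $3/4$ by a quantity that depends only on $H$.

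For $\chi(H) = r \geq 3$, I would exploit the Erd\H{o}s--Stone theorem: an $H$-free graph on $n$ vertices has at most $(1-1/(r-1)+o(1))\binom{n}{2}$ edges. When $m = \Omega(n^2)$, stability lets us delete a $\delta$-fraction of edges to obtain an $(r-1)$-partite subgraph whose canonical two-coloring (grouping the parts into two halves) already beats $m/2$ by a positive fraction of $m$, so $\mathrm{sp}(G)=\Omega(m)$. When $m = o(n^2)$, I would iteratively discard low-degree vertices to pass to a subgraph whose average degree is large compared with the pruned edge count, and rerun the argument there; the exponent $3/4+\epsilon$ should survive this reduction by routine book-keeping.

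The hard case is bipartite $H\subseteq K_{s,t}$ with $s\leq t$, where Tur\'an-type restrictions are far weaker. Here I would combine the strengthened sparse-neighborhood estimate developed in this paper with the K\H{o}v\'ari--S\'os--Tur\'an theorem: every vertex of an $H$-free graph has a $K_{s-1,t}$-free neighborhood, hence spanning only $O(d^{2-1/(s-1)})$ edges. Plugging this codegree control into the SDP-based lower bound for $\mathrm{sp}(G)$ should yield an exponent $3/4+\epsilon(s,t)$, computable in terms of the Zarankiewicz exponent of $K_{s-1,t}$, and strictly larger than $3/4$ whenever that exponent is strictly below $2$.

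The main obstacle is uniformity: as $s$ grows, the exponent $2-1/(s-1)$ tends to $2$, so the neighborhood sparsity degrades to the trivial bound and the gain over Shearer collapses. Recovering a positive $\epsilon(H)$ for every bipartite $H$ will likely require iterating one level deeper, bounding the number of copies of a proper subgraph of $H$ through each vertex or edge, and feeding that refined statistic into an SDP dual that is sensitive to higher-order codegrees rather than to raw neighborhood edge counts. A secondary obstacle is that for very sparse $H$-free graphs the standard hyperplane rounding is essentially tight against locally tree-like bipartite pieces, so a rounding scheme exploiting global parity or spectral gap information seems necessary. Because the bipartite Tur\'an problem is itself open for most $H$, I expect any full resolution to bypass it by working with combinatorial surrogates for neighborhood density rather than with absolute Zarankiewicz bounds.
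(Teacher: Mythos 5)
You are attempting to prove Conjecture~\ref{General-Graph}, which is an \emph{open conjecture} of Alon, Bollob\'as, Krivelevich and Sudakov. The paper does not prove it; it explicitly notes that even the far weaker assertion that $\mathrm{sp}(m,H)=\Omega(m^{\alpha})$ for some absolute constant $\alpha>1/2$ is unknown for general $H$. What the paper offers are partial results: Theorems~\ref{8/5}, \ref{8/5+} and \ref{main thm} verify the exponent $3/4+\epsilon(H)$ only for special families of $H$ (graphs with a vertex whose deletion leaves a bipartite graph of sufficiently small Tur\'an number, and $K_{s,t}$ with $s\in\{2,3\}$). So no complete argument should be expected here, and your proposal does not supply one.

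Both branches of your case analysis have genuine holes. For $\chi(H)=r\ge 3$ you invoke stability to extract an $(r-1)$-partite subgraph carrying $(1-\delta)m$ edges, but stability only applies when $m$ is within $o(n^2)$ of $\mathrm{ex}(n,H)$; an $H$-free graph with $m=cn^2$ for small $c>0$ need not be close to $(r-1)$-partite at all. The subsequent ``iterative pruning plus routine book-keeping'' for $m=o(n^2)$ is exactly where all known arguments lose ground: passing to a dense piece controls the surplus of that piece, not of $G$, and the best exponent obtainable this way for $K_r$-free graphs is $\tfrac12+\tfrac{3}{4r-2}$ (Glock--Janzer--Sudakov), which falls below $3/4$ already at $r=4$ --- the paper singles out beating the exponent $5/7$ in the $K_4$ case as an open problem. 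For bipartite $H$, your plan (K\H{o}v\'ari--S\'os--Tur\'an gives $K_{s-1,t}$-free neighborhoods, hence $(c,1/(s-1))$-sparsity, fed into the SDP bound) is precisely what Theorem~\ref{main thm} does via Theorem~\ref{chi32}, and it yields $\Omega(m^{1/2+3/(4s-2)})$, which exceeds $3/4$ only for $s\le 3$. You correctly identify that the neighborhood-sparsity gain collapses as $s$ grows, but the ``iterate one level deeper with higher-order codegrees'' remedy is only a hope, not an argument. The proposal therefore fails in every case except those the paper already settles.
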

In fact, we do not even know whether there exists an absolute constant $\alpha>1/2$ such that $\mathrm{sp}(m,H)=\Omega(m^{\alpha})$ for every fixed graph $H$ and all $m$. We refer the reader to \cite{ CKL2021, ZH2017} for partial results on this conjecture. The best known lower bound is due to Glock, Janzer and Sudakov \cite{GJS2021}, who proved that $\mathrm{sp}(m,K_r)=\Omega(m^{1/2+3/(4r-2)})$ for all $m$.

A  more difficult conjecture raised by Alon, Krivelevich and Sudakov \cite{AKS2005} is to determine the order of magnitude of $\mathrm{sp}(m,H)$ for all $H$.
\begin{conjecture}[Alon, Krivelevich and Sudakov \cite{AKS2005}]\label{Tight-Conj}
For any fixed graph $H$, there is a constant $c(H)>0$ such that
\[
\mathrm{sp}(m,H)=\Theta(m^{c(H)}).
\]
\end{conjecture}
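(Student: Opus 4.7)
The plan is to treat Conjecture 1.2 as a strengthening of Conjecture 1.1: one needs matching upper and lower bounds of polynomial form $m^{c(H)}$. The upper bound side is comparatively tractable. For each fixed $H$, I would exhibit an $H$-free graph sequence whose surplus is $O(m^{c(H)})$ by combining random $H$-free graphs, algebraic constructions (e.g., norm graphs for $H = K_{s,t}$ with large $t$), and pseudo-random $H$-free graphs. Comparing these with Edwards' $\Omega(\sqrt m)$ lower bound already gives $c(H) \ge 1/2$ and, conjecturally, pins down $c(H) = 3/4 + \epsilon(H)$ whenever the construction matches the extremal bound.

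For the lower bound $\mathrm{sp}(m, H) = \Omega(m^{c(H)})$ my plan is to induct on $|V(H)|$. First I would pick a vertex $v \in V(H)$ minimising the Tur\'{a}n exponent of $H - v$; then in any $H$-free graph $G$, each neighbourhood $N(u)$ is $(H - v)$-free, so contains at most $O(d(u)^{2 - \delta})$ internal edges for some $\delta = \delta(H) > 0$. This controlled sparseness of neighbourhoods is exactly the regime handled in the present paper. A Shearer-type switching argument, combined with a weighted Cauchy--Schwarz, should then yield $\mathrm{sp}(G) = \Omega(\sum_v \sqrt{d(v)}) = \Omega(m^{3/4})$ via Jensen's inequality. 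To obtain the extra factor $m^{\epsilon(H)}$, I would upgrade this via a semidefinite-programming refinement: pass to the Goemans--Williamson relaxation, bound its integrality gap in terms of the spectral radius of the adjacency matrix on $H$-free graphs, and exploit the eigenvalue improvements that follow from neighbourhood sparseness.

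The hard part will be twofold. First, one requires sharp Tur\'{a}n estimates $\mathrm{ex}(n, H - v)$ for every $H - v$ that can arise, and such bounds are unknown for most bipartite graphs (for instance $K_{4,4}$ or the cube $Q_3$); without them the neighbourhood-sparseness input is too weak to drive the induction. Second, the jump from $m^{3/4}$ to $m^{3/4 + \epsilon(H)}$ is genuinely delicate: Shearer's bound is asymptotically tight on certain quasi-random triangle-free graphs, so any polynomial improvement has to exploit structure specific to $H$ that is invisible to the local switching argument. Overcoming these obstacles likely demands a finer understanding of the MaxCut SDP integrality gap for $H$-free graphs and is, I expect, the main reason Conjecture 1.2 remains wide open even for $H = C_5$. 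Accordingly, my proposal is not to resolve the conjecture in full but to isolate the families of $H$ for which the neighbourhood-Tur\'{a}n input is strong enough to make the SDP step go through---which is precisely the programme this paper carries out.
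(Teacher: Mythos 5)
The statement you were given is Conjecture \ref{Tight-Conj}, an open conjecture of Alon, Krivelevich and Sudakov; the paper does not prove it, and indeed the authors point out that it is not even known whether there is an absolute constant $\alpha>1/2$ with $\mathrm{sp}(m,H)=\Omega(m^{\alpha})$ for every fixed $H$. Your submission is, by your own closing admission, a research programme rather than a proof, so the central gap is that nothing is actually established. There is therefore no ``paper's proof'' to compare against; the paper only supplies partial evidence (Theorems \ref{chi32}, \ref{max-cut-chromatic-3}, \ref{8/5}, \ref{8/5+} and \ref{main thm}) for restricted families of $H$.

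Beyond that, several individual steps of your outline would fail as stated. The neighbourhood-sparseness input exists only when some $H-v$ is bipartite: if $H-v$ contains a triangle for every choice of $v$ (e.g.\ $H=K_4$), then $\mathrm{ex}(n,H-v)=\Theta(n^2)$, the neighbourhoods of an $H$-free graph may be dense, and the $(c,\epsilon)$-sparse machinery gives nothing --- which is exactly why the $K_r$ case requires the entirely different arguments of Glock, Janzer and Sudakov. Even when $H-v$ is bipartite, your claimed intermediate bound $\mathrm{sp}(G)=\Omega\bigl(\sum_v\sqrt{d(v)}\bigr)=\Omega(m^{3/4})$ needs $(c,1/2)$-sparseness, i.e.\ $\mathrm{ex}(n,H-v)=O(n^{3/2})$; when the Tur\'an exponent of $H-v$ only gives $\epsilon<1/2$, Theorem \ref{more general} yields merely $\sum_v d(v)^{\epsilon}$, which falls short of $m^{3/4}$. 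The proposed upgrade to $m^{3/4+\epsilon(H)}$ via an ``SDP integrality gap controlled by the spectral radius'' is asserted without any mechanism, and it is precisely the step that remains open (for instance for $K_{s,t}$ with $s\ge4$, cf.\ Conjecture \ref{Kst}). Finally, the matching upper-bound constructions you invoke to pin down the exponent $c(H)$ are unknown for most $H$, so even the $\Theta$ in the conjecture is out of reach. In short, your proposal correctly identifies the neighbourhood-Tur\'an strategy that this paper pursues for its partial results, but it does not, and could not as written, prove the conjecture.
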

The case when $H$ is a triangle has received particularly much attention. Erd\H{o}s and Lov\'asz (see \cite{Erd1979}) firstly proved that $\mathrm{sp}(m,K_3)=\Omega(m^{{2/3}+o(1)})$. This is further improved by Poljak and Tuza \cite{PT1994}. A major breakthrough  is due to Shearer \cite{She1992}, who proved that
\[
\mathrm{sp}(G)=\Omega\left(\sum_{v\in V(G)}\sqrt{d_G(v)}\right).
\]
This bound is tight in general and implies that $\mathrm{sp}(m,K_3)=\Omega(m^{3/4})$. Finally, based on Shearer's result, Alon \cite{A1996} proved that $\mathrm{sp}(m,K_3)=\Theta(m^{4/5})$. His result is exemplary for many of the challenges and developed methods in the area. Alon, Krivelevich and Sudakov \cite{AKS2005} further extended Shearer's argument and proved a similar result for graphs with sparse neighborhoods.

For a graph $G$, let $d_G(v)$ denote the \emph{degree} of the vertex $v$ in $G$. For positive real numbers $c$ and $\epsilon$, say  that $G$ is $(c,\epsilon)$-\emph{sparse} if for every $v\in V(G)$ the subgraph induced by all the $d_G(v)$ neighbors of $v$ contains at most $c\,d_G(v)^{2-\epsilon}$ edges.
\begin{theorem}[Alon, Krivelevich and Sudakov \cite{AKS2005}]\label{AKS}
There exist two positive constants $c$ and $\delta$ with the following property. If $G$ is  $(c,1/2)$-sparse, then
\[
\mathrm{sp}(G)\ge\delta\sum_{v\in V(G)}\sqrt{d_G(v)}.
\]
\end{theorem}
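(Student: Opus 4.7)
The plan is to extend Shearer's argument for triangle-free graphs, replacing triangle-freeness with the $(c,1/2)$-sparse condition. The proof has two main phases: a dyadic reduction to an almost-regular subgraph, and a random-cut analysis of that subgraph.

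In the first phase, I would use a standard dyadic pigeonhole on the degree sequence $\{d_G(v)\}_{v\in V(G)}$ to extract a scale $d\ge 1$ and a set $V_d\subseteq V(G)$ with $d\le d_G(v)<2d$ for all $v\in V_d$ and $|V_d|\sqrt d\ge c_0\sum_{v\in V(G)}\sqrt{d_G(v)}$ (summing the geometric series across dyadic scales avoids a logarithmic loss). After iteratively deleting low-degree vertices inside $H=G[V_d]$, one obtains an induced subgraph $H'$ on $\Omega(|V_d|)$ vertices where every degree lies in $[d/4,2d]$; the number of edges inside each neighborhood can only decrease under restriction, so $H'$ remains $(c',1/2)$-sparse. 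Extending a cut of $H'$ to $G$ by randomly placing the remaining vertices loses no expected surplus, so it suffices to prove $\mathrm{sp}(H')\ge \delta'|V(H')|\sqrt d$.

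In the second phase, I would follow Shearer's two-stage random-cut construction. Take $\chi:V(H')\to\{-1,+1\}$ uniformly at random and set $X_v=\sum_{u\sim v}\chi_u$. A direct computation gives $\mathbb{E}X_v^2=d_v$ and $\mathbb{E}X_v^4=3d_v^2-2d_v$, so Paley--Zygmund yields $\mathbb{E}|X_v|\ge c_1\sqrt d$ for every $v$. Now flip every vertex $v$ with $\chi_v X_v>0$ (or a suitable randomized analogue). Letting $F=\{v:\chi_v X_v>0\}$, a short calculation shows that the cut size changes by
\[
\Delta \;=\; \sum_{v\in F}\chi_v X_v \;-\; 2\sum_{uv\in E(H'),\,u,v\in F}\chi_u\chi_v,
\]
and the expectation of the first term is $\tfrac12\sum_v\mathbb{E}|X_v|=\Omega(|V(H')|\sqrt d)$.

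The main obstacle is bounding the cross term $\mathbb{E}\sum_{uv\in E(H'),\,u,v\in F}\chi_u\chi_v$. For triangle-free graphs, the indicator $F_u$ depends only on $\{\chi_w:w\sim u\}$ and $F_v$ only on $\{\chi_w:w\sim v\}$; since $N(u)\setminus\{v\}$ and $N(v)\setminus\{u\}$ are disjoint in the triangle-free case, $F_u$ and $F_v$ couple to $\chi_u\chi_v$ only through the single shared edge $uv$, and the cross term is easily seen to be of lower order. For $(c',1/2)$-sparse graphs the two neighborhoods may share up to $c'd^{1/2}$ common neighbors, but a Fourier expansion of the indicators shows that each nonzero contribution must run through an edge of $E(H')\cap\binom{N(u)\cup N(v)}{2}$; summing over $uv\in E(H')$ and applying the bound $e(H'[N(w)])\le c'd_w^{3/2}$ at each pair yields an overall cross-term estimate of $O(c'|V(H')|\sqrt d)$. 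Choosing the constant $c$ in the hypothesis sufficiently small (hence $c'$ small) makes the main term dominate, so that $\mathbb{E}\Delta=\Omega(|V(H')|\sqrt d)$. Some particular $\chi$ then witnesses this as a true surplus, and reversing the reduction gives $\mathrm{sp}(G)\ge\delta\sum_{v\in V(G)}\sqrt{d_G(v)}$.
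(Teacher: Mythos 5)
Your plan is a Shearer-type two-round random cut, which is the route of the original AKS paper and genuinely different from this paper's: here Theorem \ref{AKS} is read off from Theorem \ref{more general} with $\epsilon=1/2$, whose proof is by semidefinite programming (explicit vectors $\mathbf x^v$ fed into Lemma \ref{sdp lemma}, with the sparseness entering only through the codegree estimate $\sum_{u\in N(v)}|N(u)\cap N(v)|=2e(G[N(v)])\le 2cd_v^{3/2}$). Unfortunately, your first phase contains a step that fails outright. The dyadic class $V_d$ is selected by degree \emph{in $G$}, but the induced subgraph $G[V_d]$ need not inherit those degrees: a vertex of $V_d$ may have almost all of its neighbours outside $V_d$, so no amount of deleting low-degree vertices inside $G[V_d]$ produces a subgraph with degrees in $[d/4,2d]$. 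The star $K_{1,n}$ (which is triangle-free, hence $(c,1/2)$-sparse) already kills the reduction: $\sum_v\sqrt{d_G(v)}=n+\sqrt n$ is dominated by the scale $d=1$, $V_1$ is the set of leaves, $G[V_1]$ is edgeless, and $\mathrm{sp}(G[V_1])=0$, so ``it suffices to prove $\mathrm{sp}(H')\ge\delta'|V(H')|\sqrt d$'' is not a valid reduction. A second, independent problem is the claim that choosing the best dyadic scale ``avoids a logarithmic loss'': the scale contributions $|V_{2^k}|2^{k/2}$ need not be geometric (they can all be comparable), so the best single scale may capture only a $1/\log$ fraction of $\sum_v\sqrt{d_G(v)}$, which would weaken the theorem. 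The fix is to drop the regularization entirely: your own inequality $\mathbb E|X_v|\ge d_v^{1/2}/\sqrt3$ (from $\mathbb EX_v^2=d_v$, $\mathbb EX_v^4\le 3d_v^2$ and H\"older) is per-vertex and needs no regularity, and both Shearer and AKS run the second phase directly on $G$.

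In the second phase, the decomposition $\Delta=\sum_{v\in F}\chi_vX_v-2\sum_{uv\in E,\,u,v\in F}\chi_u\chi_v$ and the expectation $\tfrac12\sum_v\mathbb E|X_v|$ of the main term are correct, but the cross-term bound is the entire technical content of the theorem and you only assert it. The statement that every nonzero Fourier contribution ``runs through an edge of $E(H')\cap\binom{N(u)\cup N(v)}{2}$'' needs proof, and even granting it one still has to convert a count of such edges into a bound on $|\mathbb E[\chi_u\chi_v\mathbf 1_{u\in F}\mathbf 1_{v\in F}]|$ of the form $O\bigl((d_{uv}+1)/\sqrt{d_ud_v}\bigr)$ before the neighbourhood-sparseness hypothesis can be summed as in the paper's Claim \ref{sparse neighborhood}; this is where AKS spend most of their effort (and why they work with a carefully designed switching rule rather than flipping all of $F$ simultaneously). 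As written, the proposal is a plausible outline of the AKS argument with a broken reduction in front of it and its hardest estimate left unproved.
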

Based on this result, Alon, Krivelevich and Sudakov \cite{AKS2005} established tight bounds on the magnitude of $\mathrm{sp}(m,H)$ for various graphs $H$, such as graphs obtained by connecting a single vertex to all the vertices of  nontrivial forests,  even cycles $C_{2k}$ for $k\in\{2,3,5\}$ or  complete bipartite graphs $K_{s,t}$ for $s\in\{2,3\}$. For more problems and results in this area, we refer the reader to \cite{BS2002,FHM2023,HW2021,LZ2021,LZ2022,RHZ2022,WL2022,ZH2018}. We also direct the reader to \cite{CFK2019,DFH2023} for related problems in hypergraphs.

In this paper, we extend Theorem \ref{AKS}  to all $(c,\epsilon)$-sparse graphs and get the following novel and stronger version.


\begin{theorem}\label{more general}
For any constant $\epsilon\in[0,1]$ and any $c>0$, there exist two positive constants $\delta_1=\delta_1(c)$ and $\delta_2=\delta_2(c)$ with the following property. If $G$ is  $(c,\epsilon)$-sparse, then
\begin{align}\label{main-them-spG}
\mathrm{sp}(G)\ge\delta_1\sum_{v\in V(G)}d_G(v)^{\tau}+\delta_2\sum_{uv\in E(G)}\frac{(d_G(u)d_G(v))^\tau}{|V(G)|},
\end{align}
where $\tau:=\tau(\epsilon)=\min\{\epsilon,1/2\}$. This is tight up to the values of $\delta_1$ and $\delta_2$ for each $\epsilon\in[0,1/3]$.
\end{theorem}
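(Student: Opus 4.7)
The plan is to extend the semidefinite programming (SDP) approach used by Alon, Krivelevich and Sudakov to establish Theorem \ref{AKS}. Two refinements over \cite{AKS2005} are required: first, we must handle the weaker sparseness regime $\epsilon \in [0, 1/2)$, where the per-vertex gain should drop from $\sqrt{d_v}$ to $d_v^\epsilon$; and second, we must extract the new edge-weighted surplus term in \eqref{main-them-spG}, which is not produced by the AKS argument.

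First I would work with the standard SDP relaxation
\[
\max\Bigl\{\sum_{uv \in E(G)} \tfrac{1}{2}\bigl(1 - \langle x_u, x_v\rangle\bigr) : x_v \in \mathbb{R}^N,\ \|x_v\| = 1\Bigr\} \;\geq\; \mathrm{mc}(G),
\]
and construct an explicit feasible SDP solution whose value beats $m/2$ by the claimed amount; a Goemans--Williamson hyperplane rounding then recovers the surplus for the true MaxCut, since the construction will keep all $\langle x_u, x_v\rangle$ close to $0$ and rounding loses only a constant factor there. For each vertex $v$ I would set
\[
x_v \;=\; \alpha\, \mathbf{f}_v \;+\; \beta_v\!\sum_{u \sim v} \mathbf{f}_u \;+\; \gamma_v\, \mathbf{g}_v,
\]
where $\{\mathbf{f}_u\}_{u \in V(G)}$ are orthonormal coordinates, $\mathbf{g}_v$ is a reserve coordinate enforcing $\|x_v\| = 1$, $\alpha$ is a small negative constant, and $\beta_v = \Theta(d_v^{\tau-1})$. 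The inner product on an edge $uv$ then acquires a direct negative contribution of order $\alpha(\beta_u + \beta_v) \asymp -(d_v^{\tau-1} + d_u^{\tau-1})$, together with a cross-term $\beta_u\beta_v\,|N_G(u) \cap N_G(v)|$ coming from common neighbors. Since $\sum_{u \sim v}|N_G(u) \cap N_G(v)| = 2\,e(G[N_G(v)]) \leq 2c\,d_v^{2-\epsilon}$ by the $(c, \epsilon)$-sparse condition, a direct calculation shows that the cross-terms can be absorbed into the direct terms precisely when $\tau \leq \epsilon$, while the unit-norm constraint $\beta_v^2 d_v \lesssim 1$ separately forces $\tau \leq 1/2$; this yields $\tau = \min(\epsilon, 1/2)$, and the vertex term $\delta_1\sum_v d_v^\tau$ emerges from summing $|\alpha|\beta_v d_v$ over $v$.

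For the new edge term, I would augment the construction with, for each edge $uv$, an additional coordinate $\mathbf{h}_{uv}$ appearing in $x_u$ with coefficient $+\zeta_{uv}$ and in $x_v$ with coefficient $-\zeta_{uv}$, where $\zeta_{uv}^2 = \Theta((d_u d_v)^\tau / |V(G)|)$. Each such coordinate contributes $-\zeta_{uv}^2$ to $\langle x_u, x_v\rangle$, and the normalization $1/|V(G)|$ is chosen precisely so that $\sum_{u \sim v}\zeta_{uv}^2$ can be absorbed into the remaining unit-norm budget via $\gamma_v$ (with a minor capping argument handling very high-degree vertices). Summing the gains across $E(G)$ then yields the $\delta_2$ term in \eqref{main-them-spG}.

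The main obstacle is the simultaneous bookkeeping: verifying that every $x_v$ stays on the unit sphere ($\gamma_v^2 \geq 0$), that the unwanted cross-terms (from common neighbors and from the interaction of the vertex and edge coordinates) are always dominated by the intended negative contributions, and that the resulting $\langle x_u, x_v\rangle$ remain small enough for hyperplane rounding to preserve a constant fraction of the SDP surplus. The $(c,\epsilon)$-sparse hypothesis is used at each cross-term estimate. For the tightness assertion when $\epsilon \in [0, 1/3]$, I would exhibit $(c,\epsilon)$-sparse graphs---most naturally of algebraic or bipartite-incidence type---whose surplus matches the lower bound up to constants; matching upper bounds of the required form are already known in several of the $H$-free regimes discussed in the introduction.
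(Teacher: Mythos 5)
Your vertex-term construction is essentially the paper's: the paper also builds explicit vectors and feeds them to Lemma \ref{sdp lemma} (whose form does not require unit vectors, sparing you the reserve coordinate $\mathbf g_v$), puts $-\rho d_i^{\tau-1}$ on the neighbour coordinates, and absorbs the common-neighbour cross-term $\rho^2(d_id_j)^{\tau-1}d_{ij}$ using exactly the estimate $\sum_{j\sim i}d_{ij}=2e(G[N(i)])\le 2c\,d_i^{2-\epsilon}$, which is where $\tau=\min\{\epsilon,1/2\}$ enters. That part of your plan is sound.

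The gap is in the edge term. Dedicating an orthogonal coordinate $\mathbf h_{uv}$ to each edge with $\zeta_{uv}^2=\Theta((d_ud_v)^\tau/n)$ cannot be normalized: the squared-norm budget consumed at a vertex $v$ is $\sum_{u\sim v}\zeta_{uv}^2=\frac{d_v^\tau}{n}\sum_{u\sim v}d_u^\tau$, which is $\Theta(d^{1+2\tau}/n)$ already for $d$-regular graphs and hence unbounded once $d\gg n^{1/(1+2\tau)}$; for instance, $K_{n/2,n/2}$ is $(c,\epsilon)$-sparse for every $\epsilon$ and there the budget is $\Theta(n)$. This is not a ``minor capping'' issue: forcing $\sum_{u\sim v}\zeta_{uv}^2\le 1/2$ caps the total edge gain at $O(n)$, whereas the theorem claims $\Omega(d^{1+2\tau})$, and the dense regime is precisely where the second term is the point of the theorem. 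The paper obtains that term by a different, more economical mechanism: every vector $\mathbf x^i$ carries a small positive background value $\rho d_i^{\tau}/n$ on \emph{all} non-neighbour coordinates; the inner product of $i$'s neighbourhood block (total mass $-\rho d_i^{\tau}$) with $j$'s background then contributes about $-\rho^2(d_id_j)^{\tau}/n$ on each edge $ij$, while the background costs only $O(\rho^2)$ in squared norm \emph{once per vertex}, shared across all edges at $i$, rather than $\zeta_{uv}^2$ per edge. You need that shared, non-orthogonal mechanism (or something equally efficient) to recover $\delta_2\sum_{uv}(d_ud_v)^\tau/|V(G)|$. Separately, your tightness discussion is too vague and aims at the wrong target: what is needed is a $(c,\epsilon)$-sparse graph whose surplus is $O$ of the right-hand side of \eqref{main-them-spG}, and the paper supplies this via the Delsarte--Goethals--Turyn strongly regular graphs with $k=\Theta(q^{\epsilon/(1-\epsilon)})$ combined with the eigenvalue bound of Lemma \ref{Eigen}, not via Tur\'an-type upper bounds for $H$-free families.
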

\noindent{\bf Remarks.}

\begin{itemize}
\item  Theorem \ref{AKS} can be deduced from Theorem \ref{more general} by letting  $\epsilon=1/2$.

\item The second term in the right hand of the inequality \eqref{main-them-spG} is useful when $G$ is relatively dense.

\item The positive constant $c$ can be  arbitrary  large in Theorem \ref{more general}.
\end{itemize}


 Note that a graph $G$ is $d$-\textit{degenerate} if every subgraphs of $G$ contains a vertex of degree at most $d$. Theorem \ref{more general} implies the following result with respect to the degeneracy of a graph, from which we can derive many well-known bounds and new (tight) bounds on $\mathrm{sp}(m,H)$ for various graphs $H$ (see Section \ref{Applications}).
\begin{theorem}\label{Sparse-Dense}
Let $\epsilon\in[0,1]$ and $c>0$ be real numbers,   and let $\tau=\min\{\epsilon,1/2\}$. Suppose that $G$ is  $(c,\epsilon)$-sparse with $m$ edges. Then the following statements  hold.
\begin{itemize}
  \item[$(\mathrm{i})$] If $G$ is $d$-degenerate, then $\mathrm{sp}(G)=\Omega(m/d^{1-\tau})$.
  \item[$(\mathrm{ii})$] If $G$ has average degree $d$, then $\mathrm{sp}(G)=\Omega(d^{1+2\tau})$.
\end{itemize}
\end{theorem}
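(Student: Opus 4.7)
The plan is to deduce both parts of the theorem from Theorem~\ref{more general} by lower-bounding exactly one of its two summands in each case. Recall that Theorem~\ref{more general} asserts $\mathrm{sp}(G)\ge\delta_1\sum_{v}d_G(v)^\tau+\delta_2\sum_{uv\in E(G)}(d_G(u)d_G(v))^\tau/|V(G)|$; part~$(\mathrm{i})$ will use only the first summand, and part~$(\mathrm{ii})$ only the second.

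For part~$(\mathrm{i})$, I orient the edges of $G$ according to a degeneracy ordering, producing an acyclic orientation with $d^+(v)\le d$ for every $v$ and $\sum_v d^+(v)=m$. Since $d_G(v)\ge d^+(v)$ and the function $x\mapsto x^{\tau-1}$ is nonincreasing on $(0,d]$ for $\tau\in[0,1]$, I get $d_G(v)^\tau\ge d^+(v)^\tau\ge d^+(v)/d^{1-\tau}$ (the inequality is trivial when $d^+(v)=0$). Summing over $v$ gives $\sum_v d_G(v)^\tau\ge m/d^{1-\tau}$, and Theorem~\ref{more general} then yields $\mathrm{sp}(G)=\Omega(m/d^{1-\tau})$.

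For part~$(\mathrm{ii})$ I estimate the second summand by AM-GM followed by Jensen's inequality. The crucial observation is that each vertex $v$ is an endpoint of exactly $d_G(v)$ edges, so
\[
\prod_{uv\in E(G)}(d_G(u)d_G(v))^\tau \;=\; \prod_{v\in V(G)}d_G(v)^{\tau d_G(v)}.
\]
Applying AM-GM to the $m$ edge-terms yields
\[
\frac{1}{m}\sum_{uv\in E(G)}(d_G(u)d_G(v))^\tau \;\ge\; \exp\!\Big(\tfrac{\tau}{m}\sum_{v}d_G(v)\log d_G(v)\Big).
\]
Because $f(x)=x\log x$ is convex on $[0,\infty)$ (with $f(0)=0$), Jensen's inequality gives $\sum_{v}d_G(v)\log d_G(v)\ge n\,d\log d=2m\log d$, so the exponent is at least $2\tau\log d$ and the geometric mean is at least $d^{2\tau}$. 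Hence $\sum_{uv\in E(G)}(d_G(u)d_G(v))^\tau\ge m\,d^{2\tau}$, and dividing by $n=2m/d$ turns the second term of Theorem~\ref{more general} into at least $\delta_2 d^{1+2\tau}/2=\Omega(d^{1+2\tau})$.

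I expect the main obstacle to be the AM-GM/Jensen calculation in part~$(\mathrm{ii})$: one must recognise that the edge-product factors cleanly through the vertex degrees and then apply convexity of $x\log x$ in precisely the right direction. Minor technicalities include discarding isolated vertices (which does not affect $\mathrm{sp}(G)$ and only increases the average degree), and the trivial regime $d<1$ in which the asserted bound is vacuous.
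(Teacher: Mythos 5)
Your proof is correct, and the two parts relate to the paper differently. Part (i) is essentially identical to the paper's argument: both use the degeneracy ordering to write $m=\sum_v d^+(v)$ with $d^+(v)\le d$ and bound the first summand of Theorem \ref{more general} via $d_G(v)^\tau\ge d^+(v)^\tau\ge d^+(v)/d^{1-\tau}$. Part (ii), however, takes a genuinely different route. The paper passes to a subgraph $G'$ of minimum degree at least $d/2$ (Lemma \ref{folklore}), uses $\mathrm{sp}(G)\ge\mathrm{sp}(G')$ (Lemma \ref{Induced-Sub}), and then bounds the second summand of Theorem \ref{more general} for $G'$ term by term, since every factor $d_{G'}(u)$ is at least $d/2$ and $e(G')\ge |V(G')|d/4$. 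You instead stay in $G$ and lower-bound $\sum_{uv\in E(G)}(d_G(u)d_G(v))^\tau$ directly, noting that the product of the edge-terms factors as $\prod_v d_G(v)^{\tau d_G(v)}$ and then combining AM--GM with convexity of $x\log x$; the computation checks out (with the convention $0\log 0=0$ for isolated vertices and the exact identity $n=2m/d$). The paper's route is shorter, but it silently applies Theorem \ref{more general} to the subgraph $G'$, and $(c,\epsilon)$-sparseness is not in general inherited by induced subgraphs (the degree $d_{G'}(v)$ can shrink much faster than the number of edges inside the shrunken neighborhood), so that step really needs the sparseness of $G'$ to be re-derived from context, as the paper does in its applications via $H$-freeness. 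Your argument never leaves $G$ and therefore sidesteps this issue entirely, at the cost of a slightly heavier convexity computation; both give the same order of magnitude.
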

\noindent{\bf Remarks}.
\begin{itemize}
\item Theorem \ref{Sparse-Dense} $(\mathrm{i})$ provides a partial result of a conjecture raised by Carlson, Kolla, Li, Mani, Sudakov and Trevisan \cite{CKL2021}, which states that any $H$-free $d$-degenerate graph with $m$ edges has surplus $\Omega(m/\sqrt d)$.

\item Theorem \ref{Sparse-Dense} $(\mathrm{ii})$  can be viewed as a local version of a recent result given by Glock, Janzer and Sudakov \cite{GJS2021}, which says that every graph with average degree $d$ and less than $(1-\varrho)d^3/6$ triangles for some $\varrho>0$ has surplus $\Omega(d^2)$.
\end{itemize}

The rest of the paper is organized as follows. The  remainder of this section provides some definitions and notations.   In Section \ref{Applications}, we present a series of applications of Theorem \ref{Sparse-Dense} to $H$-free graphs.  We prove Theorems \ref{more general} and \ref{Sparse-Dense} in Section \ref{Main-Result-Proof}. In Section \ref{SEC:Remarks}, we give some concluding remarks.

\noindent{\bf Notations}: All graphs considered are finite, simple and undirected. Let $G$ be a graph. We denote $e(G)$ the number of edges of $G$. A \emph{cut} $(A,B)$ of $G$, is bipartition of $V(G)$ satisfying $A\cup B=V(G)$ and $A\cap B=\emptyset$. An edge $e$ is called a \emph{cut edge} of $(A,B)$ if $|e\cap A|=|e\cap B|=1$. The \emph{size} of $(A,B)$ is the number of cut edges of $(A, B)$.  For $S\subseteq V(G)$, let $G[S]$ denote the  induced subgraph of $G$ by $S$. For $v\in V(G)$, $N_G(v)$ and $d_G(v)$  stand for the neighbourhood and degree of $v$, respectively.  We occasionally omit subscripts if they are clear from the context. For a positive integer $n$, write $[n]:=\{1,\ldots,n\}$. Denote by $\|\cdot\|$ the Euclidean norm.

\section{Applications of Theorem \ref{Sparse-Dense}}\label{Applications}
In this section, we list many applications of  Theorem \ref{Sparse-Dense} to $H$-free graphs for some specific graphs $H$. Surprisingly, Tur\'{a}n type theory of graphs plays a key role in our arguments. 

Let $H$ be a graph. The \emph{Tur\'{a}n number} of  $H$, denoted by $\text{ex}(n,H)$, is the maximum number of edges in an $n$-vertex $H$-free graph. The study of $\mathrm{ex}(n, H)$ is perhaps the central topic in extremal graph  theory. One of the most famous results in this regard is Tur\'{a}n's theorem, which states that for every integer $\ell \ge 2$ the Tur\'{a}n number $\mathrm{ex}(n,K_{\ell+1})$ is uniquely achieved by the complete balanced $\ell$-partite graph on $n$ vertices.  By the classical Erd\H{o}s-Stone-Simonovits
theorem \cite{ES66,ES46}, we have
\[
\text{ex}(n,H)=\left(1-\frac{1}{\chi(H)-1}+o(1)\right)n^2,
\]
where $\chi(H)$ is the chromatic number of $H$. Therefore, the order of $\text{ex}(n,H)$ is known, unless $H$ is a bipartite graph. One of the major open
problems in extremal graph theory is to understand the function $\text{ex}(n,H)$ when $H$ is a bipartite graph. We refer the reader to \cite{AKS2003, BC2018, FS2013, ST2020} for more details.

We now present the following general result obtained from Theorem \ref{Sparse-Dense}, which makes our applications as immediate corollaries.

\begin{theorem}\label{chi32}
Let $\epsilon\in[0,1]$ be a constant and $\tau=\min\{\epsilon,1/2\}$. If $H$ is a graph that has a vertex whose removal results in a bipartite graph $B$ with $\mathrm{ex}(n,B)=O(n^{2-\epsilon})$, then
\begin{align}\label{Chi3}
\mathrm{sp}(m,H)=\Omega\left(m^{(1+2\tau)/(2+\tau)}\right).
\end{align}
Moreover, if $\mathrm{ex}(n,H)=O(n^{2-\alpha})$ for some $\alpha\in[0,1]$, then
\begin{align}\label{Chi2}
\mathrm{sp}(m,H)=\Omega\left(m^{\tau+(1-\tau)/(2-\alpha)}\right).
\end{align}
\end{theorem}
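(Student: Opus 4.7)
The plan is to reduce Theorem~\ref{chi32} to Theorem~\ref{Sparse-Dense} by first establishing that every $H$-free graph $G$ is $(C_1,\epsilon)$-sparse. If $H$ has a vertex $v_0$ with $H-v_0=B$ bipartite and $\mathrm{ex}(k,B)\le C_1 k^{2-\epsilon}$, then for every $v\in V(G)$ the induced subgraph $G[N_G(v)]$ must be $B$-free, since otherwise a copy of $B$ in $N_G(v)$ together with $v$ would already contain a copy of $H$. Consequently $e(G[N_G(v)])\le\mathrm{ex}(d_G(v),B)\le C_1 d_G(v)^{2-\epsilon}$, so $G$ is $(C_1,\epsilon)$-sparse, and the same reasoning makes every subgraph $G'\subseteq G$ $(C_1,\epsilon)$-sparse, because $G'$ is still $H$-free.

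For the first bound \eqref{Chi3} I will run a standard degeneracy dichotomy on a parameter $d>0$. If $G$ is $d$-degenerate, Theorem~\ref{Sparse-Dense}(i) gives $\mathrm{sp}(G)=\Omega(m/d^{1-\tau})$. Otherwise $G$ contains a subgraph $G'$ with $\delta(G')\ge d$ and thus average degree at least $d$; by Theorem~\ref{Sparse-Dense}(ii), $\mathrm{sp}(G')=\Omega(d^{1+2\tau})$, and the elementary inequality $\mathrm{sp}(G)\ge\mathrm{sp}(G')$ (extend an optimal cut of $G'$ randomly to $V(G)\setminus V(G')$) transports this bound to $G$. Choosing $d=\Theta(m^{1/(2+\tau)})$ balances the two cases and yields $\mathrm{sp}(G)=\Omega(m^{(1+2\tau)/(2+\tau)})$.

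For the second bound \eqref{Chi2} I will use the global Tur\'an hypothesis $\mathrm{ex}(n,H)\le C_2 n^{2-\alpha}$ to control the degeneracy $D$ of $G$, and then invoke only part~(i) of Theorem~\ref{Sparse-Dense}. Taking a subgraph $G^{\ast}\subseteq G$ with $\delta(G^{\ast})=D$, the $H$-freeness of $G^{\ast}$ gives $D|V(G^{\ast})|/2\le e(G^{\ast})\le C_2|V(G^{\ast})|^{2-\alpha}$, which forces $|V(G^{\ast})|\ge\Omega(D^{1/(1-\alpha)})$ and hence $e(G^{\ast})\ge\Omega(D^{(2-\alpha)/(1-\alpha)})$. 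Combined with $e(G^{\ast})\le m$, this yields $D=O(m^{(1-\alpha)/(2-\alpha)})$; the boundary case $\alpha=1$ is handled separately since then $D\le 2C_2$ is immediate. Plugging this into Theorem~\ref{Sparse-Dense}(i) and simplifying via the identity $1-(1-\alpha)(1-\tau)/(2-\alpha)=\tau+(1-\tau)/(2-\alpha)$ finishes the argument.

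The conceptual heart of the proof is the local-to-global sparseness reduction at the start; the rest is a short degeneracy dichotomy plus a routine exponent computation. The only genuine subtlety is the corner case $\alpha=1$ in the second bound, where the exponent $1/(1-\alpha)$ degenerates and one must instead read off degeneracy $O(1)$ directly from the Tur\'an hypothesis.
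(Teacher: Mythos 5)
Your proposal is correct and follows essentially the same route as the paper: deduce $(c,\epsilon)$-sparseness from the $B$-freeness of every neighbourhood, then run the degeneracy dichotomy with $d=\Theta(m^{1/(2+\tau)})$ for \eqref{Chi3} and use the global Tur\'an bound to control degeneracy for \eqref{Chi2}. Your second part merely repackages the paper's ``otherwise we reach a contradiction'' step as a direct upper bound $D=O(m^{(1-\alpha)/(2-\alpha)})$ on the degeneracy, and your explicit treatment of the corner case $\alpha=1$ is a small point of added care rather than a different argument.
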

\noindent{\bf Remarks}.
\begin{itemize}
\item The lower bound \eqref{Chi2} is better than  \eqref{Chi3} if $\alpha>\tau/(1+\tau)$. We believe that \eqref{Chi2} is more useful than \eqref{Chi3} when $H$ is a bipartite graph.

\item  We  give an alternative proof of \eqref{Chi2} without using Theorem \ref{Sparse-Dense} in Appendix.
\end{itemize}

\begin{proof}[\bf Proof]
Let $G$ be an $H$-free graph with $m$ edges. By the definition of $B$, we know that $G$ is a $(c,\epsilon)$-sparse graph for some constant $c>0$ in view of $\mathrm{ex}(n,B)=O(n^{2-\epsilon})$.

We first prove the lower bound \eqref{Chi3}.  Let $d=\Theta(m^{1/(2+\tau)})$. If $G$ is $d$-degenerate, then
\[
\mathrm{sp}(G)=\Omega(m/d^{1-\tau})=\Omega\left(m^{(1+2\tau)/(2+\tau)}\right)
\]
by  Theorem \ref{Sparse-Dense} $(\mathrm{i})$. Otherwise, there is an induced subgraph $G'$ of $G$ with minimum degree at least $d$. Clearly, $G'$ is $(c,\epsilon)$-sparse with average degree at least $d$ as $G'$ is also $H$-free and $\mathrm{ex}(n,B)=O(n^{2-\epsilon})$. It follows from Theorem \ref{Sparse-Dense} $(\mathrm{ii})$ that
$$\mathrm{sp}(G)\ge\mathrm{sp}(G')=\Omega(d^{1+2\tau})=\Omega\left(m^{(1+2\tau)/(2+\tau)}\right).$$
So, \eqref{Chi3} holds.

In the following, we establish \eqref{Chi2}. Indeed, if $G$ is $d$-degenerate with $d=\Theta(m^{(1-\alpha)/(2-\alpha)})$, then
\[
\mathrm{sp}(G)=\Omega(m/d^{1-\tau})=\Omega\left(m^{\tau+(1-\tau)/(2-\alpha)}\right)
\]
by  Theorem \ref{Sparse-Dense} $(\mathrm{i})$. We are done.  Otherwise, $G$ contains a subgraph $G'$ with minimum degree at least $d$. Note that the number of vertices of $G'$ is $N\leq2m/d=\Theta(d^{1/(1-\alpha)})$ as $d=\Theta(m^{(1-\alpha)/(2-\alpha)})$. It follows that $d=\Omega(N^{1-\alpha})$. Thus, the number of edges of $G'$ is
\[
e(G')\geq\frac12dN=\Omega(N^{2-\alpha}).
\]
Since $G'$ is $H$-free and $\mathrm{ex}(n,H)=O(n^{2-\alpha})$ for some $\alpha\in[0,1]$, we know that $e(G')=O(N^{2-\alpha})$. This leads to a contradiction by choose a proper constant factor in the notation $d=\Theta(m^{(1-\alpha)/(2-\alpha)})$.
\end{proof}

\subsection{Known results revisited and some new tight results}

Theorem \ref{chi32} is very useful. It not only  deduces some existing results, but also gets many new results. In what follows, we present some reasonable bounds derived immediately from Theorem \ref{chi32}, all of which contribute to Conjectures \ref{General-Graph} and \ref{Tight-Conj}.

The first optimal result of MaxCut in $H$-free graphs  is due to Alon \cite{A1996}, who proved that $\mathrm{sp}(m,K_3)=\Theta(m^{4/5})$. Alon,  Krivelevich and Sudakov \cite{AKS2005} generalized this and established an interesting result. See also \cite{BJS2023} for a modified version of the following theorem.
\begin{theorem}[Alon,  Krivelevich and Sudakov \cite{AKS2005}]\label{graph-removal-one-forest}
If $H$ is a graph obtained by connecting a single vertex to all vertices of a fixed nontrivial forest, then
\[
\mathrm{sp}(m,H)=\Theta\left(m^{4/5}\right).
\]
\end{theorem}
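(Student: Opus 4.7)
The plan is to prove matching $\Theta(m^{4/5})$ bounds by invoking Theorem \ref{chi32} for the lower bound and inheriting Alon's known triangle-free construction for the upper bound.

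For the lower bound, I would write $H$ as $\{w\} \vee F$, where $w$ is the apex vertex and $F$ is the nontrivial forest. Then $H - w = F$ is bipartite, and since every $n$-vertex forest has at most $n-1$ edges, $\mathrm{ex}(n, F) \le n - 1 = O(n^{2-1})$. Hence the hypothesis of Theorem \ref{chi32} is met with $\epsilon = 1$, giving $\tau = \min\{1, 1/2\} = 1/2$. Substituting into \eqref{Chi3},
\[
\mathrm{sp}(m, H) = \Omega\!\left(m^{(1 + 2 \cdot 1/2)/(2 + 1/2)}\right) = \Omega\!\left(m^{4/5}\right).
\]

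For the matching upper bound, the key observation is that $H$ necessarily contains a triangle: since $F$ is nontrivial it has an edge $xy$, and together with $w$ (which is adjacent to both endpoints) the vertices $\{w, x, y\}$ span a copy of $K_3$ in $H$. Consequently every triangle-free graph is automatically $H$-free, so it suffices to exhibit triangle-free graphs realizing the upper bound. Alon's theorem \cite{A1996} already supplies, for each $m$, a triangle-free graph with $m$ edges and surplus $O(m^{4/5})$ (obtained from a pseudo-random $(n, d, \lambda)$-graph with $d = \Theta(n^{2/3})$ and $\lambda = O(\sqrt{d})$, whose spectral gap forces every cut to be nearly balanced and yields $\mathrm{sp}(G) = O(n\lambda) = O(n\sqrt{d}) = O(m^{4/5})$). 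The very same graph is then $H$-free, so $\mathrm{sp}(m, H) = O(m^{4/5})$.

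The anticipated obstacle is largely illusory: both directions are short consequences of tools already in place. The two facts required are (i) that forests have linear Tur\'an number, so one may plug $\epsilon = 1$ into Theorem \ref{chi32}, and (ii) that $H$ itself contains a $K_3$, which lets the $H$-free upper bound inherit directly from Alon's triangle-free construction. The real content has been absorbed into Theorem \ref{chi32} and into Alon's matching construction; the proof is essentially the bookkeeping that ties them together.
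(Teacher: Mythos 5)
Your proposal is correct and follows essentially the same route as the paper, which derives this statement from Theorem \ref{chi32} (via its strengthening, Theorem \ref{max-cut-chromatic-3}, using the fact that the neighborhood of any vertex in an $H$-free graph is $F$-free and hence spans $O(d)$ edges) and notes tightness for any $H$ containing $K_3$ via Alon's triangle-free construction. The only cosmetic difference is that you plug in $\epsilon=1$ where the paper uses $\epsilon=1/2$; both give $\tau=1/2$ and the same exponent $4/5$.
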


Using Theorem \ref{chi32} with $\tau=\epsilon=1/2$ in \eqref{Chi3}, we can strengthen Theorem \ref{graph-removal-one-forest} as follows.
\begin{theorem}\label{max-cut-chromatic-3}
If $H$ is a graph that has a vertex whose removal results in a nontrivial bipartite graph $B$ with $\mathrm{ex}(n,B)=O(n^{3/2})$, then
\[
\mathrm{sp}(m,H)=\Omega\left(m^{4/5}\right).
\]
This is tight for any $H$ containing a copy of $K_3$ or $K_{3,3}$.
\end{theorem}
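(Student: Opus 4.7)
The plan is to split the proof into a lower bound, which follows immediately from Theorem~\ref{chi32}, and tightness constructions, which are inherited from classical results via a monotonicity observation.

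For the lower bound, I would invoke Theorem~\ref{chi32} with $\epsilon = 1/2$, so that $\tau = \min\{\epsilon,1/2\} = 1/2$. By hypothesis, $H$ has a vertex whose removal leaves a nontrivial bipartite graph $B$ with $\mathrm{ex}(n,B) = O(n^{3/2}) = O(n^{2-\epsilon})$, which is precisely the input required for the bound~\eqref{Chi3}. Substituting $\tau = 1/2$ into the exponent there gives
\[
\frac{1+2\tau}{2+\tau} = \frac{1+1}{2+1/2} = \frac{4}{5},
\]
so $\mathrm{sp}(m,H) = \Omega(m^{4/5})$, as desired.

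For the matching upper bound, the key observation is the trivial monotonicity $\mathrm{sp}(m,H) \le \mathrm{sp}(m,H')$ whenever $H' \subseteq H$, since every $H'$-free graph on $m$ edges is automatically $H$-free. If $K_3 \subseteq H$, then Alon's theorem~\cite{A1996}, which asserts $\mathrm{sp}(m,K_3) = \Theta(m^{4/5})$, immediately gives $\mathrm{sp}(m,H) = O(m^{4/5})$. If instead $K_{3,3} \subseteq H$, the analogous bound $\mathrm{sp}(m,K_{3,3}) = \Theta(m^{4/5})$ established by Alon, Krivelevich and Sudakov~\cite{AKS2005} does the same job. In both cases the matching constructions (triangle-free or $K_{3,3}$-free graphs achieving surplus $O(m^{4/5})$) are imported wholesale from these references, so no new construction is required here.

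The argument has no real obstacle beyond bookkeeping: one has to check that the hypothesis of Theorem~\ref{max-cut-chromatic-3} on $B$ lines up exactly with the $\epsilon = 1/2$ case of Theorem~\ref{chi32}, and that $\tau=1/2$ is the value that yields the clean exponent $4/5$ in~\eqref{Chi3}. The mild subtlety is that the alternative bound~\eqref{Chi2} could in principle be stronger than~\eqref{Chi3}, but since no upper bound on $\mathrm{ex}(n,H)$ itself is assumed in this statement, only~\eqref{Chi3} is available; this is compatible with tightness exactly because the extremal examples happen to contain $K_3$ or $K_{3,3}$.
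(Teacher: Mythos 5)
Your proposal is correct and follows essentially the same route as the paper: the lower bound is obtained by plugging $\epsilon=\tau=1/2$ into the bound \eqref{Chi3} of Theorem \ref{chi32}, giving the exponent $(1+2\tau)/(2+\tau)=4/5$, and tightness is inherited from the extremal constructions for $K_3$ (Alon) and $K_{3,3}$ (Alon--Krivelevich--Sudakov) via the monotonicity $\mathrm{sp}(m,H)\le\mathrm{sp}(m,H')$ for $H'\subseteq H$, exactly as the paper's remark indicates.
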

\noindent{\bf Remarks}. Two well-known examples in this flavor are $K_3$ and $K_{3,t}$ for any $t\ge3$, whose original proof can be found in \cite{A1996} and \cite{AKS2005}, respectively. Their extremal constructions are also suitable for our theorem. A typical new example of Theorem \ref{max-cut-chromatic-3} is the even wheel $W_{2k}$ obtained by connecting a vertex to all the vertices of a cycle of length $2k$.
\begin{corollary}\label{max-cut-wheel}
For any integer $k\ge 2$, we have
\[
\mathrm{sp}(m,W_{2k})=\Theta\left(m^{4/5}\right).
\]
\end{corollary}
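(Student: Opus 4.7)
The plan is a one-shot application of Theorem \ref{max-cut-chromatic-3} to $H = W_{2k}$. The apex vertex of the wheel is the natural candidate: deleting it leaves exactly $C_{2k}$, which is a nontrivial bipartite graph (the nontriviality is automatic since $2k \geq 4$). So the only external input required is a Tur\'an-type bound on the even cycle.

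For this I would invoke the classical Bondy--Simonovits theorem, which gives $\mathrm{ex}(n, C_{2k}) = O(n^{1+1/k})$. Since $1 + 1/k \leq 3/2$ for every $k \geq 2$, this yields $\mathrm{ex}(n, C_{2k}) = O(n^{3/2})$, so the hypothesis of Theorem \ref{max-cut-chromatic-3} is satisfied with $B = C_{2k}$. Applying that theorem directly produces the lower bound
\[
\mathrm{sp}(m, W_{2k}) = \Omega(m^{4/5}).
\]

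For the matching upper bound I would appeal to the tightness clause of Theorem \ref{max-cut-chromatic-3}: since $W_{2k}$ contains a triangle (the hub together with any edge of the outer cycle), every $K_3$-free graph is automatically $W_{2k}$-free, and hence $\mathrm{sp}(m, W_{2k}) \leq \mathrm{sp}(m, K_3)$. Alon's construction \cite{A1996} witnessing $\mathrm{sp}(m, K_3) = O(m^{4/5})$ therefore forces $\mathrm{sp}(m, W_{2k}) = O(m^{4/5})$, and combining with the lower bound yields $\mathrm{sp}(m, W_{2k}) = \Theta(m^{4/5})$.

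There is essentially no technical obstacle here: the whole purpose of Theorem \ref{max-cut-chromatic-3} is to make corollaries of this shape immediate, and the two structural facts about $W_{2k}$ needed (apex-removal yields $C_{2k}$, and $W_{2k} \supseteq K_3$) are both visible from the definition. The only ingredient outside the excerpt is the Bondy--Simonovits even-cycle bound, which is standard.
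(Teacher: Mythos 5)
Your proposal is correct and is exactly the argument the paper intends: remove the hub to obtain $C_{2k}$, invoke Bondy--Simonovits to get $\mathrm{ex}(n,C_{2k})=O(n^{1+1/k})=O(n^{3/2})$ for $k\ge2$, apply Theorem \ref{max-cut-chromatic-3} for the lower bound, and use the tightness clause via $K_3\subseteq W_{2k}$ (i.e.\ Alon's triangle-free construction) for the matching upper bound. No issues.
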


In fact, we have a more general class of graphs. For two integers $k\ge 3$ and $t\ge 1$, let $C_k$ denote the cycle of length $k$ and let $tC_k$ denote the disjoint union of $t$ copies of $C_k$.  Recently, Hou, Hu, Li, Liu, Yang and Zhang \cite{HHL2023} considered the Tur\'an numbers of disjoint union of bipartite graphs, and the following is a typical corollary of their main result.
\begin{theorem}[Hou, Hu, Li, Liu, Yang and Zhang \cite{HHL2023}]\label{THM:C2k}
For any fixed $k \geq 2$, there exist constants $N_0$ and $c_{k} > 0$ such that for all integers $n \geq N_0$ and $t \in [0,c_{k} \cdot \frac{{\rm{ex}}(n,C_{2k})}{n}]$, we have
\begin{align*}
    \mathrm{ex}\left(n, (t+1)C_{2k}\right)
        = \binom{n}{2} - \binom{n-t}{2} + \mathrm{ex}(n-t, C_{2k}).
\end{align*}
\end{theorem}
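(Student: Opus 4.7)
The plan is to prove matching upper and lower bounds, with the upper bound via induction on $t$. For the lower bound, the natural construction is to take a set $T$ of $t$ vertices fully joined to all of $V\setminus T$, and to place a $C_{2k}$-extremal graph $H^*$ on the remaining $n-t$ vertices. Any $(t+1)$ vertex-disjoint $C_{2k}$'s would, by pigeonhole on $|T|=t$, leave at least one copy entirely inside $V\setminus T$, contradicting the $C_{2k}$-freeness of $H^*$. This graph has exactly $\binom{n}{2}-\binom{n-t}{2}+\mathrm{ex}(n-t,C_{2k})$ edges.

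For the upper bound, induct on $t$, with base case $t=0$ being just the definition of $\mathrm{ex}(n,C_{2k})$. Suppose $G$ is $(t+1)C_{2k}$-free with $e(G)>\binom{n}{2}-\binom{n-t}{2}+\mathrm{ex}(n-t,C_{2k})$. The key claim will be that there exists a vertex $v$ such that $G-v$ is still $tC_{2k}$-free. Granted this, the inductive hypothesis (applied with parameter $t-1$ on $n-1$ vertices, which one checks still satisfies $t-1\leq c_k\,\mathrm{ex}(n-1,C_{2k})/(n-1)$ for $n$ large) yields $e(G-v)\leq\binom{n-1}{2}-\binom{n-t}{2}+\mathrm{ex}(n-t,C_{2k})$. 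Using $d_G(v)\leq n-1$, this gives $e(G)\leq(n-1)+e(G-v)\leq\binom{n}{2}-\binom{n-t}{2}+\mathrm{ex}(n-t,C_{2k})$, a contradiction.

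To produce such a vertex $v$, exploit the smallness of $t$. The hypothesis $t\leq c_k\,\mathrm{ex}(n,C_{2k})/n$ together with the Bondy--Simonovits bound $\mathrm{ex}(n,C_{2k})=O(n^{1+1/k})$ implies that the target edge count $\binom{n}{2}-\binom{n-t}{2}+\mathrm{ex}(n-t,C_{2k})$ is dominated by its linear term $tn-\binom{t+1}{2}$. The edge excess therefore forces some vertex $v$ to have degree very close to $n-1$; in fact, one can extract $v$ with $d_G(v)\geq n-O(n^{1/k})$. For such a $v$, suppose for contradiction that $G-v$ contains $t$ vertex-disjoint copies $F_1,\dots,F_t$ of $C_{2k}$. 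Since $v$ has at most $2kt=O(n^{1/k})$ neighbors inside $\bigcup_i V(F_i)$, its residual neighborhood $N(v)\setminus\bigcup_i V(F_i)$ still has size $(1-o(1))n$. A Bondy--Simonovits-type argument then locates a path of length $2k-2$ between two neighbors of $v$ in the graph $G-\{v\}-\bigcup_i V(F_i)$, which closes to a $C_{2k}$ through $v$ disjoint from the $F_i$, yielding $(t+1)C_{2k}$ in $G$ and a contradiction.

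The main obstacle is this last step, namely guaranteeing a $C_{2k}$ through $v$ avoiding $F_1,\dots,F_t$. The delicate issue is that the residual graph need not be globally dense enough to force a $C_{2k}$ outright, so one must leverage the local density around $v$ via a supersaturation version of Bondy--Simonovits applied to paths of length $2k-2$ between prescribed endpoints. The hypothesis $t\leq c_k\,\mathrm{ex}(n,C_{2k})/n$ is precisely the threshold under which removing $2kt$ vertices still leaves sufficient edge density in $N(v)$ for this local argument to go through; beyond this range, the link of $v$ may be too depleted and the induction breaks down, which explains why the theorem is stated exactly in this regime.
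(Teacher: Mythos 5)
First, a point of reference: the paper does not prove this statement at all — it is quoted as Theorem 2.5 from the cited manuscript \cite{HHL2023}, which is listed as ``in preparation,'' so there is no in-paper proof to compare your argument against. I can only assess your proposal on its own terms. Your lower-bound construction (a set $T$ of $t$ dominating vertices forming a clique and joined completely to a $C_{2k}$-extremal graph on the remaining $n-t$ vertices, with the pigeonhole observation that one of any $t+1$ disjoint cycles must avoid $T$) is correct and standard, as is the arithmetic of the induction skeleton: if one can always find a vertex $v$ such that $G-v$ is $tC_{2k}$-free, then $e(G)\le (n-1)+\mathrm{ex}(n-1,tC_{2k})$ closes the induction.

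The genuine gap is that the existence of such a $v$ is precisely where the entire difficulty of the theorem lives, and both of your supporting claims for it are unjustified. (1) You assert that the edge excess forces a vertex of degree $n-O(n^{1/k})$. The natural argument — take a maximal family of $s\le t$ disjoint copies of $C_{2k}$ covering a set $U$ with $|U|\le 2kt$, note $e(G-U)\le\mathrm{ex}(n,C_{2k})$, and conclude $\sum_{u\in U}d(u)\ge (1-o(1))tn$ — only yields a vertex of degree about $n/(2k)$, and nothing in your sketch upgrades this to $n-o(n)$; that upgrade is essentially a stability statement and cannot be waved through. (2) Even granting such a $v$, the final step needs a path of length $2k-2$ between two neighbours of $v$ inside $G-\{v\}-\bigcup_i V(F_i)$, but this residual graph may a priori contain almost no edges: the set $\{v\}\cup\bigcup_i V(F_i)$ has only $O(n^{1/k})$ vertices yet can absorb $\Theta(n^{1+1/k})$ edges, which is the same order as the entire edge budget $tn+\mathrm{ex}(n-t,C_{2k})$. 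So ``local density around $v$'' is not guaranteed by the hypotheses you have extracted, and no supersaturation version of Bondy--Simonovits applies to an essentially empty graph. You correctly flag this last step as the main obstacle, but flagging it does not fill it; as it stands the proposal reduces the theorem to two unproved claims, at least one of which (the near-full-degree vertex) appears to require the stability machinery that presumably constitutes the bulk of \cite{HHL2023}.
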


Combining Theorems \ref{max-cut-chromatic-3} and \ref{THM:C2k}, we have the following corollary.
\begin{corollary}
For any fixed $t\ge 1$ and $k \ge 2$, if $H$ is a graph that has a vertex whose removal results in $tC_{2k}$, then
\[
\mathrm{sp}(m,H)=\Theta\left(m^{4/5}\right).
\]
\end{corollary}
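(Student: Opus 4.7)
The plan is to apply Theorem \ref{max-cut-chromatic-3} with the bipartite graph $B = tC_{2k}$; the only non-trivial task is verifying the Tur\'an-type hypothesis $\mathrm{ex}(n, tC_{2k}) = O(n^{3/2})$.

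First, for the base case $t = 1$, the classical Bondy--Simonovits even-cycle theorem gives $\mathrm{ex}(n, C_{2k}) = O(n^{1+1/k})$, which is $O(n^{3/2})$ for every $k \geq 2$. For $t \geq 2$, apply Theorem \ref{THM:C2k} with its parameter equal to $t-1$: since $t$ is fixed while $\mathrm{ex}(n, C_{2k})/n \to \infty$, the range condition $t-1 \in [0,\, c_k \cdot \mathrm{ex}(n,C_{2k})/n]$ is satisfied for all sufficiently large $n$. Theorem \ref{THM:C2k} then gives
\begin{equation*}
\mathrm{ex}(n, tC_{2k}) = \binom{n}{2} - \binom{n-(t-1)}{2} + \mathrm{ex}(n-(t-1), C_{2k}) = O(n) + O(n^{1+1/k}) = O(n^{3/2}),
\end{equation*}
where the first term is $O(n)$ because $t-1$ is a constant and the second is bounded by the Bondy--Simonovits estimate.

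With the Tur\'an bound in hand, $B = tC_{2k}$ satisfies the hypothesis of Theorem \ref{max-cut-chromatic-3} (it is nontrivial since $k \geq 2$), yielding the lower bound $\mathrm{sp}(m, H) = \Omega(m^{4/5})$. For the matching upper bound, $H$ contains a triangle whenever the distinguished vertex $v$ has two adjacent neighbors in $tC_{2k}$, so the tightness clause of Theorem \ref{max-cut-chromatic-3} applies: Alon's triangle-free polarity construction supplies an $H$-free graph with $m$ edges whose surplus is $O(m^{4/5})$.

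The only real obstacle is the verification in the second case ($t \geq 2$): one must check carefully that the asymptotic parameter range in Theorem \ref{THM:C2k} is indeed satisfied for fixed $t$, and then combine the resulting expression with the Bondy--Simonovits bound to land below $n^{3/2}$. Once this is in place, the remainder is a direct appeal to the already-established machinery of Theorems \ref{max-cut-chromatic-3} and \ref{THM:C2k}.
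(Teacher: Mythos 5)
Your derivation of the lower bound is exactly the paper's (one-line) argument with the details filled in correctly: the paper likewise obtains $\mathrm{ex}(n,tC_{2k})=O(n)+\mathrm{ex}(n-O(1),C_{2k})=O(n^{3/2})$ from Theorem \ref{THM:C2k} together with Bondy--Simonovits, and then invokes Theorem \ref{max-cut-chromatic-3}; your verification of the parameter range (using $\mathrm{ex}(n,C_{2k})/n\to\infty$) and the reduction of the $t=1$ case to Bondy--Simonovits alone are both fine.

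The one genuine gap is in the matching upper bound, and you have in fact put your finger on it yourself with the word ``whenever.'' The tightness clause of Theorem \ref{max-cut-chromatic-3} applies only if $H$ contains $K_3$ or $K_{3,3}$. A graph $H$ with a vertex $v$ whose removal leaves $tC_{2k}$ need not contain either: if the neighbourhood of $v$ on the cycles is an independent set (or empty), then $H$ is triangle-free, and $H$ can never contain $K_{3,3}$ since $K_{3,3}$ minus any vertex contains $K_{2,3}$, which has a vertex of degree $3$ and hence does not embed in the $2$-regular graph $tC_{2k}$. In that regime the claimed $O(m^{4/5})$ upper bound is not justified by anything you (or the paper) cite, and it can actually fail: for $H=C_4\cup K_1$ every $H$-free graph with more than a constant number of edges is $C_4$-free, so $\mathrm{sp}(m,H)=\Theta(m^{5/6})$ by Theorem \ref{even-cycle-AKS}, not $\Theta(m^{4/5})$. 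So your argument (like the paper's) establishes the $\Omega(m^{4/5})$ bound for all such $H$, but the $\Theta$ requires the additional hypothesis that the apex vertex has two adjacent neighbours on some cycle (or some other source of a tight construction); you should state that hypothesis explicitly rather than leave it conditional.
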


Alon,  Krivelevich and Sudakov \cite{AKS2005} also proved an elegant result for even cycles.
\begin{theorem}[Alon,  Krivelevich and Sudakov \cite{AKS2005}]\label{even-cycle-AKS}
For any integer $k\ge 2$, we have
\[
\mathrm{sp}(m,C_{2k})=\Omega\left(m^{(2k+1)/(2k+2)}\right).
\]
This is tight up to a constant factor for $k\in\{2,3,5\}$.
\end{theorem}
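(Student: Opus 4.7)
The plan is to deduce the lower bound as an immediate corollary of Theorem~\ref{chi32}, by combining the Bondy--Simonovits theorem with the trivial observation that deleting any vertex of $C_{2k}$ leaves a tree. Specifically, set $H = C_{2k}$. Removing any vertex of $C_{2k}$ leaves a path on $2k-1$ vertices, which is a tree and hence satisfies $\mathrm{ex}(n, B) = O(n) = O(n^{2-\epsilon})$ with $\epsilon = 1$; thus in the notation of Theorem~\ref{chi32} we have $\tau = \min\{\epsilon, 1/2\} = 1/2$. The Bondy--Simonovits theorem gives $\mathrm{ex}(n, C_{2k}) = O(n^{1+1/k}) = O(n^{2-\alpha})$ with $\alpha = 1 - 1/k$. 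Substituting into \eqref{Chi2} yields the exponent
\[
\tau + \frac{1-\tau}{2-\alpha} \;=\; \frac{1}{2} + \frac{1/2}{(k+1)/k} \;=\; \frac{1}{2} + \frac{k}{2(k+1)} \;=\; \frac{2k+1}{2k+2},
\]
which is exactly the claimed bound $\mathrm{sp}(m, C_{2k}) = \Omega\!\left(m^{(2k+1)/(2k+2)}\right)$.

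For tightness in the range $k \in \{2, 3, 5\}$, I would appeal to the classical constructions of $C_{2k}$-free graphs with $\Theta(n^{1+1/k})$ edges coming from generalized polygons: the incidence graphs of projective planes for $k = 2$, of generalized quadrangles for $k = 3$, and of generalized hexagons for $k = 5$. These incidence graphs are nearly regular and $(d, O(\sqrt{d}))$-pseudo-random with $d = \Theta(n^{1/k}) = \Theta(m^{1/(k+1)})$. Combined with a suitable non-bipartite modification (to avoid the trivial bipartite surplus $m/2$) and the standard spectral upper bound on the surplus of pseudo-random graphs, one obtains $\mathrm{sp}(G) = O(n\sqrt{d}) = O\!\left(m^{(2k+1)/(2k+2)}\right)$, matching the lower bound up to a constant factor.

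The main obstacle is entirely on the tightness side. With Theorem~\ref{chi32} in hand, the lower bound collapses to a one-line exponent calculation, so no new ideas are required there. By contrast, tightness hinges on the existence of $C_{2k}$-free pseudo-random graphs with $\Theta(n^{1+1/k})$ edges, which is a notoriously open problem for $k \notin \{2, 3, 5\}$; this is the reason the matching upper bound is stated only for these three values.
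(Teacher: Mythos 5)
Your lower-bound derivation is exactly the paper's: it deduces Theorem~\ref{even-cycle-AKS} from \eqref{Chi2} of Theorem~\ref{chi32} with $\tau=1/2$ (since deleting a vertex of $C_{2k}$ leaves a forest, so $\epsilon=1$) and $\alpha=1-1/k$ from Bondy--Simonovits, giving the same exponent $(2k+1)/(2k+2)$. The tightness for $k\in\{2,3,5\}$ is not proved in the paper either --- it is inherited from the constructions in \cite{AKS2005} --- and your sketch via generalized polygons (in practice, their polarity graphs rather than the bipartite incidence graphs, together with a spectral bound as in Lemma~\ref{Eigen}) is consistent with that source.
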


The famous Bondy-Simonovits theorem \cite{BS1974} shows that the number of edges in an $n$-vertex $C_{2k}$-free graph is at most $100n^{1+1/k}$. Note also that any graph containing no certain fixed nontrivial forest can span only a linear number of edges.  These together with Theorem \ref{chi32} by choosing $\alpha=1-1/k$ and $\tau=1/2$ ($\epsilon=1$) in the lower bound \eqref{Chi2} imply Theorem \ref{even-cycle-AKS}. Actually, Theorem \ref{even-cycle-AKS} is a special case of a recent result observed by Balla, Janzer and Sudakov \cite{BJS2023}. The result is not explicitly stated in \cite{AKS2005}, but can be obtained straightforwardly using their method.
\begin{theorem}[Balla, Janzer and Sudakov \cite{BJS2023}]\label{max-cut-chromatic-2}
If $H$ is a graph that has a vertex whose removal makes the graph acyclic and its Tur\'an number satisfies $\mathrm{ex}(n,H)=O(n^{1+\alpha})$ for some constant $\alpha$, then
\[
\mathrm{sp}(m,H)=\Omega\left(m^{(\alpha+2)/(2\alpha+2)}\right).
\]
\end{theorem}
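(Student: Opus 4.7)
The plan is to derive this theorem as an immediate consequence of Theorem \ref{chi32}, specifically the bound \eqref{Chi2}. Let $v$ be a vertex of $H$ such that $B := H - v$ is acyclic. Since every forest $B$ satisfies $\mathrm{ex}(n, B) = O(n)$, I can take $\epsilon = 1$ in the hypothesis of Theorem \ref{chi32}, so that $\tau = \min\{\epsilon, 1/2\} = 1/2$. The structural hypothesis of Theorem \ref{chi32} (namely, that $H - v$ is bipartite with an appropriately small Tur\'an number) is thus satisfied.

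Next, I reconcile the two conventions for the Tur\'an exponent of $H$. Theorem \ref{chi32} uses the parameterization $\mathrm{ex}(n, H) = O(n^{2-\alpha})$, whereas the present statement uses $\mathrm{ex}(n, H) = O(n^{1+\alpha})$. To invoke \eqref{Chi2}, I rewrite $\mathrm{ex}(n, H) = O(n^{1+\alpha})$ as $O(n^{2-\beta})$ with $\beta = 1 - \alpha \in [0,1]$ (the range is automatic since $H$ has at least one edge and trivially $\mathrm{ex}(n, H) = O(n^2)$). Applying \eqref{Chi2} with parameter $\beta$ in place of $\alpha$, and with $\tau = 1/2$, yields
\[
\mathrm{sp}(m, H) = \Omega\left(m^{\tau + (1-\tau)/(2-\beta)}\right) = \Omega\left(m^{1/2 + 1/(2(1+\alpha))}\right).
\]

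A short arithmetic step then completes the proof: combining the two fractions in the exponent,
\[
\frac{1}{2} + \frac{1}{2(1+\alpha)} = \frac{(1+\alpha) + 1}{2(1+\alpha)} = \frac{\alpha + 2}{2\alpha + 2},
\]
which is exactly the target exponent. As a sanity check, the formula specializes correctly to the known cases: $\alpha = 1/k$ (matching $\mathrm{ex}(n, C_{2k}) = O(n^{1+1/k})$) gives the exponent $(2k+1)/(2k+2)$ of Theorem \ref{even-cycle-AKS}, and $\alpha = 1$ recovers the universal Shearer-type bound $m^{3/4}$. The main obstacle in this proof is essentially absent, since all the heavy lifting has been done by Theorem \ref{chi32} (and, behind it, by Theorem \ref{more general}). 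The only care required is in recognizing that a forest is a bipartite graph with linear Tur\'an number (so $\epsilon = 1$ is admissible) and in translating between the two parameterizations of the Tur\'an exponent of $H$.
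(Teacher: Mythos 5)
Your proposal is correct and follows exactly the paper's route: the paper likewise obtains Theorem \ref{max-cut-chromatic-2} by applying \eqref{Chi2} of Theorem \ref{chi32} with $\epsilon=1$ (so $\tau=1/2$), using that a forest has linear Tur\'an number. Your translation between the parameterizations $\mathrm{ex}(n,H)=O(n^{1+\alpha})$ and $O(n^{2-\beta})$ and the resulting exponent arithmetic are both accurate.
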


It is easy to check that Theorem \ref{max-cut-chromatic-2} can be obtained by letting $\tau=1/2$ ($\epsilon=1$) in  \eqref{Chi2} of Theorem \ref{chi32}.
Note that this result is tight for $K_{2,t}$ by the Erd\H{o}s-R\'enyi graph (see \cite{AKS2005}).

\subsection{Surplus with exponent at least 3/4}
In this section, based on Theorem \ref{chi32}, we give new families of $H$-free graphs such that $\mathrm{sp}(m,H)=\Omega(m^{3/4+\epsilon(H)})$ for some constant $\epsilon(H)>0$. We hope that our results could shed some light on solving Conjecture \ref{General-Graph}. 

Applying Theorem \ref{chi32} with $\epsilon>2/5$ in  \eqref{Chi3}, we have the following result.
\begin{theorem}\label{8/5}
For any constant $\epsilon\in(2/5,1]$, if $H$ is a graph that has a vertex whose removal results in a bipartite graph $B$ with $\mathrm{ex}(n,B)=O(n^{2-\epsilon})$, then there exists a constant $\epsilon(H)>0$ such that
\[
\mathrm{sp}(m,H)=\Omega\left(m^{3/4+\epsilon(H)}\right).
\]
\end{theorem}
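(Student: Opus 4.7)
The plan is to derive this theorem as a direct consequence of Theorem \ref{chi32}, specifically via the lower bound \eqref{Chi3}, followed by a short arithmetic check on the resulting exponent. Because $H$ has a vertex whose removal yields a bipartite graph $B$ with $\mathrm{ex}(n,B)=O(n^{2-\epsilon})$, the hypothesis of the first half of Theorem \ref{chi32} is satisfied for the given $\epsilon$. Therefore, setting $\tau=\min\{\epsilon,1/2\}$, Theorem \ref{chi32} immediately yields
\[
\mathrm{sp}(m,H)=\Omega\!\left(m^{(1+2\tau)/(2+\tau)}\right).
\]

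The remaining task is to verify that the exponent $(1+2\tau)/(2+\tau)$ strictly exceeds $3/4$ under the hypothesis $\epsilon\in(2/5,1]$. Regardless of whether $\epsilon\le 1/2$ (so $\tau=\epsilon>2/5$) or $\epsilon>1/2$ (so $\tau=1/2>2/5$), we have $\tau>2/5$. A one-line computation shows that $(1+2\tau)/(2+\tau) > 3/4$ is equivalent to $4(1+2\tau)>3(2+\tau)$, i.e., $5\tau>2$, which holds. Hence, defining
\[
\epsilon(H) := \frac{1+2\tau}{2+\tau}-\frac{3}{4} = \frac{5\tau-2}{4(2+\tau)} > 0,
\]
we obtain $\mathrm{sp}(m,H)=\Omega(m^{3/4+\epsilon(H)})$, as desired.

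There is essentially no obstacle in this proof: the substantive work is already buried inside Theorem \ref{chi32} (and, through it, inside Theorem \ref{more general}). The only subtlety worth flagging is the choice of range $\epsilon>2/5$, which is precisely the threshold at which the exponent $(1+2\tau)/(2+\tau)$ in \eqref{Chi3} crosses $3/4$; any weaker sparsity hypothesis (smaller $\epsilon$) would not yield a power-saving beyond $m^{3/4}$ via this route, which explains why the statement is restricted to $\epsilon\in(2/5,1]$.
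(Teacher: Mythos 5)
Your proof is correct and matches the paper's approach exactly: the paper likewise obtains Theorem \ref{8/5} by applying the bound \eqref{Chi3} of Theorem \ref{chi32} with $\epsilon>2/5$, and your arithmetic verification that $(1+2\tau)/(2+\tau)>3/4$ precisely when $\tau>2/5$ (in both cases $\tau=\epsilon$ and $\tau=1/2$) is the only detail the paper leaves implicit.
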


We give some examples to show that there do exist some graphs $H$ satisfying Theorem \ref{8/5}. We first describe a family of graphs $B$ with $\mathrm{ex}(n,B)=\Theta(n^{8/5})$. For any integer $t\ge2$, let $H_{2,t}$ be a graph obtained from two vertex disjoint copies of $K_{2,t}$ by adding a matching that joins the two images of every vertex in $K_{2,t}$. Jiang, Ma and Yepremyan \cite{JMY2022} proved that
\[
\mathrm{ex}(n,H_{2,t})=\Theta(n^{8/5})
\]
for sufficiently large $t$, and $\mathrm{ex}(n,H_{2,t})=O(n^{8/5})$ for any $t\ge2$. In particular, the 3-dimensional cube $Q_8$ is isomorphic to $H_{2,2}$ and $\mathrm{ex}(n,Q_8)=O(n^{8/5})$. More generally, Kang, Kim and Liu \cite{KKL2021} proved that there exists a family of graphs $H_t$ such that for any $t\ge2$
\[
\mathrm{ex}(n,H_t)=\Theta(n^{2-t/(2t+1)})=O(n^{8/5}).
\]

\begin{corollary}
For any integer $t\ge2$ and $B\in\{H_{2,t},H_t\}$, let $H\supset B$ be a graph with $|B|+1$ vertices. Then there exists a constant $\epsilon_t\ge0$ such that
\[
\mathrm{sp}(m,H)=\Omega(m^{3/4+\epsilon_t}).
\]
\end{corollary}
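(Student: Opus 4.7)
The plan is to reduce the statement directly to Theorem \ref{8/5}, with a tiny boundary case absorbed by Theorem \ref{chi32} itself. First I would let $v$ denote the unique vertex of $V(H)\setminus V(B)$. Since $H\supset B$ and $|V(H)|=|V(B)|+1$, deleting $v$ recovers $B$ on the nose, so $H$ has a vertex whose removal yields a bipartite graph. I would verify bipartiteness from the explicit constructions cited earlier: for $H_{2,t}$ the $2$-colouring inherited from the two disjoint copies of $K_{2,t}$ flips under the added matching, and for the Kang--Kim--Liu graphs $H_t$ bipartiteness is built into the construction used to achieve their Tur\'an exponent.

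Next I would read off the correct value of $\epsilon$ for each family. For $B=H_{2,t}$ the Jiang--Ma--Yepremyan bound gives $\mathrm{ex}(n,B)=O(n^{8/5})$ for every $t\ge 2$, corresponding to $\epsilon=2/5$. For $B=H_t$ the Kang--Kim--Liu bound gives $\mathrm{ex}(n,B)=\Theta(n^{2-t/(2t+1)})$, corresponding to $\epsilon=t/(2t+1)$. A one-line check shows $t/(2t+1)>2/5\iff 5t>4t+2\iff t>2$.

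The conclusion is then a case split. When $\epsilon>2/5$ (i.e.\ $B=H_t$ with $t\ge 3$), I would invoke Theorem \ref{8/5} directly to obtain $\epsilon_t=\epsilon(H)>0$ with
\[
\mathrm{sp}(m,H)=\Omega\!\left(m^{3/4+\epsilon_t}\right).
\]
In the boundary case $\epsilon=2/5$ (namely $B=H_{2,t}$ for any $t\ge 2$, and also $B=H_2$), Theorem \ref{8/5} does not strictly apply, so I would instead plug $\tau=\epsilon=2/5$ into \eqref{Chi3} of Theorem \ref{chi32}, which yields exponent $(1+4/5)/(2+2/5)=3/4$ and hence the stated bound with $\epsilon_t=0$.

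The main (essentially only) obstacle is routine bookkeeping: confirming that both families are bipartite and matching each Tur\'an exponent with the correct range in Theorems \ref{8/5} and \ref{chi32}. No new analytic estimate is needed beyond those already established in the paper, which is exactly why this corollary is framed as an immediate application.
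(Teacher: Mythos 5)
Your proposal is correct and is essentially the paper's own (unwritten) argument: the corollary is presented as an immediate consequence of Theorem \ref{chi32} and Theorem \ref{8/5} together with the quoted Tur\'an bounds, and your reduction — apex vertex, bipartiteness check, and reading off $\epsilon=2/5$ for $H_{2,t}$ versus $\epsilon=t/(2t+1)$ for $H_t$ — is exactly what is intended. Your explicit handling of the boundary case also correctly explains why the statement only claims $\epsilon_t\ge 0$: for $B=H_{2,t}$ (any $t\ge2$) and $B=H_2$ one has $\epsilon=2/5$, so Theorem \ref{8/5} does not apply and \eqref{Chi3} yields only the exponent $3/4$ (i.e.\ $\epsilon_t=0$), while $B=H_t$ with $t\ge3$ gives $\epsilon>2/5$ and hence $\epsilon_t>0$.
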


Applying Theorem \ref{chi32} with $\epsilon\le2/5$, we can also find graphs $H$ satisfying Conjecture \ref{General-Graph}. In this situation, we need extra restrictive assumption on the Tur\'an number of $H$ and the value of $\alpha$. Using \eqref{Chi2} with simple calculations, we have the following result.
\begin{theorem}\label{8/5+}
For any constants $\epsilon\in(0,2/5]$ and $\alpha>(2-4\epsilon)/(3-4\epsilon)$, let $H$ be a graph with $\mathrm{ex}(n,H)=O(n^{2-\alpha})$. If $H$ has a vertex whose removal results in a bipartite graph $B$ with $\mathrm{ex}(n,B)=\Theta(n^{2-\epsilon})$, then there exists a constant $\epsilon(H)>0$ such that
\[
\mathrm{sp}(m,H)=\Omega\left(m^{3/4+\epsilon(H)}\right).
\]
\end{theorem}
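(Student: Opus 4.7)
The plan is to apply inequality \eqref{Chi2} of Theorem \ref{chi32} directly and then verify the resulting exponent exceeds $3/4$ via a short algebraic manipulation. The hypotheses of Theorem \ref{8/5+} line up precisely with those of the second half of Theorem \ref{chi32}: removing a suitable vertex of $H$ yields a bipartite graph $B$ with $\mathrm{ex}(n,B) = \Theta(n^{2-\epsilon})$, which in particular gives the $O(n^{2-\epsilon})$ bound needed to deduce $(c,\epsilon)$-sparsity of any $H$-free graph, and $\mathrm{ex}(n,H) = O(n^{2-\alpha})$ supplies the second ingredient.

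Since $\epsilon\in(0,2/5]\subseteq(0,1/2]$, we have $\tau = \min\{\epsilon,1/2\} = \epsilon$, and \eqref{Chi2} immediately yields
$$\mathrm{sp}(m,H) \;=\; \Omega\bigl(m^{\,\epsilon + (1-\epsilon)/(2-\alpha)}\bigr).$$
Defining $\epsilon(H) := \epsilon + (1-\epsilon)/(2-\alpha) - 3/4$, the task reduces to showing $\epsilon(H) > 0$ under the assumption $\alpha > (2-4\epsilon)/(3-4\epsilon)$. Clearing the denominator $2-\alpha > 0$, the inequality $\epsilon(H) > 0$ is equivalent to
$$4(1-\epsilon) \;>\; (3-4\epsilon)(2-\alpha),$$
which, after expanding and collecting terms, simplifies to $\alpha(3-4\epsilon) > 2-4\epsilon$. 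Because $\epsilon \le 2/5$ forces $3-4\epsilon \ge 7/5 > 0$, one may divide by $3-4\epsilon$ to recover exactly $\alpha > (2-4\epsilon)/(3-4\epsilon)$, the given hypothesis.

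Combining these two steps gives $\mathrm{sp}(m,H) = \Omega(m^{3/4+\epsilon(H)})$ with $\epsilon(H) > 0$, as required. There is no genuine obstacle in the argument: all the heavy lifting is encapsulated in Theorem \ref{chi32}, and the threshold $\alpha = (2-4\epsilon)/(3-4\epsilon)$ is dictated precisely by demanding that the exponent in \eqref{Chi2} surpass $3/4$. The creative content of Theorem \ref{8/5+} therefore lies in pinpointing this threshold and recognizing that it singles out a rich family of $H$-free problems for which Conjecture \ref{General-Graph} can be verified, rather than in the verification itself.
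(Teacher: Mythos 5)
Your proof is correct and follows exactly the route the paper intends: the paper proves Theorem \ref{8/5+} by "using \eqref{Chi2} with simple calculations," which is precisely your application of \eqref{Chi2} with $\tau=\epsilon$ followed by the algebraic verification that the exponent exceeds $3/4$ if and only if $\alpha>(2-4\epsilon)/(3-4\epsilon)$. Your computation checks out, so there is nothing to add.
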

\noindent{\bf Remarks}. Observe that $B\subseteq H$. This means that any $B$-free graph is also $H$-free, implying that $\alpha\le\epsilon$. This together with our assumption $\alpha>(2-4\epsilon)/(3-4\epsilon)$ yields that $$0.3596\approx(7-\sqrt{17})/8<\epsilon\le2/5.$$

\tikzset{every picture/.style={line width=0.75pt}} 
\begin{figure}[htbp]
\centering
\begin{tikzpicture}[x=0.55pt,y=0.55pt,yscale=-0.75,xscale=0.75]

\draw    (366.62,200.69) -- (366.86,288.28) ;
\draw   (366.07,200.44) .. controls (368.31,200.64) and (370.27,198.89) .. (370.45,196.53) .. controls (370.64,194.17) and (368.98,192.11) .. (366.74,191.91) .. controls (364.51,191.72) and (362.55,193.47) .. (362.36,195.82) .. controls (362.18,198.18) and (363.84,200.25) .. (366.07,200.44) -- cycle ;
\draw    (361.46,200.73) -- (179.97,286.3) ;
\draw    (369.62,200.69) -- (451.09,290.7) ;
\draw    (364.07,200.44) -- (271.98,286.54) ;
\draw    (370.62,197.69) -- (541.94,290.51) ;
\draw    (179.85,295.08) -- (179.25,383.58) ;
\draw   (179.3,294.83) .. controls (181.54,295.03) and (183.5,293.27) .. (183.68,290.92) .. controls (183.87,288.56) and (182.21,286.5) .. (179.97,286.3) .. controls (177.74,286.11) and (175.78,287.86) .. (175.59,290.21) .. controls (175.41,292.57) and (177.07,294.64) .. (179.3,294.83) -- cycle ;
\draw    (179.3,294.83) -- (150.45,383.58) ;
\draw    (179.85,295.08) -- (212.37,384.34) ;
\draw    (269.85,294.32) -- (269.25,382.82) ;
\draw   (269.31,294.07) .. controls (271.54,294.27) and (273.5,292.52) .. (273.69,290.16) .. controls (273.87,287.8) and (272.21,285.74) .. (269.98,285.54) .. controls (267.74,285.35) and (265.78,287.1) .. (265.6,289.45) .. controls (265.41,291.81) and (267.07,293.88) .. (269.31,294.07) -- cycle ;
\draw    (367.42,297.74) -- (367.97,383.58) ;
\draw   (366.87,297.49) .. controls (369.11,297.68) and (371.07,295.93) .. (371.25,293.57) .. controls (371.44,291.22) and (369.78,289.15) .. (367.54,288.96) .. controls (365.31,288.76) and (363.35,290.51) .. (363.16,292.87) .. controls (362.98,295.22) and (364.64,297.29) .. (366.87,297.49) -- cycle ;
\draw    (366.87,297.49) -- (338.45,383.58) ;
\draw    (367.42,297.74) -- (398.21,384.34) ;
\draw  [color={rgb, 255:red, 0; green, 0; blue, 0 }  ,draw opacity=1 ][fill={rgb, 255:red, 0; green, 0; blue, 0 }  ,fill opacity=1 ] (149.78,392.11) .. controls (152.01,392.31) and (153.97,390.55) .. (154.16,388.2) .. controls (154.34,385.84) and (152.68,383.78) .. (150.45,383.58) .. controls (148.21,383.39) and (146.25,385.14) .. (146.07,387.49) .. controls (145.88,389.85) and (147.54,391.92) .. (149.78,392.11) -- cycle ;
\draw  [fill={rgb, 255:red, 74; green, 74; blue, 74 }  ,fill opacity=1 ] (178.91,391.64) .. controls (181.15,391.84) and (183.11,390.08) .. (183.3,387.73) .. controls (183.48,385.37) and (181.82,383.31) .. (179.59,383.11) .. controls (177.35,382.92) and (175.39,384.67) .. (175.2,387.02) .. controls (175.02,389.38) and (176.68,391.45) .. (178.91,391.64) -- cycle ;
\draw  [fill={rgb, 255:red, 74; green, 74; blue, 74 }  ,fill opacity=1 ] (213.5,390.97) .. controls (215.74,391.17) and (217.7,389.42) .. (217.88,387.06) .. controls (218.07,384.71) and (216.41,382.64) .. (214.17,382.44) .. controls (211.94,382.25) and (209.97,384) .. (209.79,386.35) .. controls (209.6,388.71) and (211.27,390.78) .. (213.5,390.97) -- cycle ;
\draw  [fill={rgb, 255:red, 74; green, 74; blue, 74 }  ,fill opacity=1 ] (240.78,390.35) .. controls (243.02,390.55) and (244.98,388.8) .. (245.16,386.44) .. controls (245.35,384.08) and (243.69,382.02) .. (241.45,381.82) .. controls (239.22,381.63) and (237.26,383.38) .. (237.07,385.73) .. controls (236.89,388.09) and (238.55,390.16) .. (240.78,390.35) -- cycle ;
\draw  [fill={rgb, 255:red, 74; green, 74; blue, 74 }  ,fill opacity=1 ] (268.2,390.88) .. controls (270.43,391.08) and (272.4,389.33) .. (272.58,386.97) .. controls (272.77,384.61) and (271.11,382.55) .. (268.87,382.35) .. controls (266.64,382.16) and (264.67,383.91) .. (264.49,386.26) .. controls (264.3,388.62) and (265.96,390.69) .. (268.2,390.88) -- cycle ;
\draw  [fill={rgb, 255:red, 74; green, 74; blue, 74 }  ,fill opacity=1 ] (291.52,387.11) .. controls (291.7,389.34) and (293.75,390.99) .. (296.1,390.78) .. controls (298.45,390.57) and (300.22,388.59) .. (300.04,386.36) .. controls (299.86,384.12) and (297.81,382.48) .. (295.46,382.68) .. controls (293.1,382.89) and (291.34,384.87) .. (291.52,387.11) -- cycle ;
\draw  [fill={rgb, 255:red, 74; green, 74; blue, 74 }  ,fill opacity=1 ] (337.71,392.11) .. controls (339.94,392.31) and (341.9,390.55) .. (342.09,388.2) .. controls (342.28,385.84) and (340.61,383.78) .. (338.38,383.58) .. controls (336.14,383.39) and (334.18,385.14) .. (334,387.49) .. controls (333.81,389.85) and (335.47,391.92) .. (337.71,392.11) -- cycle ;
\draw  [fill={rgb, 255:red, 74; green, 74; blue, 74 }  ,fill opacity=1 ] (366.84,391.64) .. controls (369.08,391.84) and (371.04,390.08) .. (371.23,387.73) .. controls (371.41,385.37) and (369.75,383.31) .. (367.52,383.11) .. controls (365.28,382.92) and (363.32,384.67) .. (363.13,387.02) .. controls (362.95,389.38) and (364.61,391.45) .. (366.84,391.64) -- cycle ;
\draw  [fill={rgb, 255:red, 74; green, 74; blue, 74 }  ,fill opacity=1 ] (397.83,390.97) .. controls (400.07,391.17) and (402.03,389.42) .. (402.21,387.06) .. controls (402.4,384.71) and (400.74,382.64) .. (398.5,382.44) .. controls (396.27,382.25) and (394.31,384) .. (394.12,386.35) .. controls (393.94,388.71) and (395.6,390.78) .. (397.83,390.97) -- cycle ;
\draw  [fill={rgb, 255:red, 74; green, 74; blue, 74 }  ,fill opacity=1 ] (450.75,294.96) .. controls (452.99,295.16) and (454.95,293.41) .. (455.14,291.05) .. controls (455.32,288.7) and (453.66,286.63) .. (451.43,286.43) .. controls (449.19,286.24) and (447.23,287.99) .. (447.04,290.35) .. controls (446.86,292.7) and (448.52,294.77) .. (450.75,294.96) -- cycle ;
\draw  [fill={rgb, 255:red, 74; green, 74; blue, 74 }  ,fill opacity=1 ] (541.6,294.77) .. controls (543.84,294.97) and (545.8,293.22) .. (545.99,290.86) .. controls (546.17,288.51) and (544.51,286.44) .. (542.27,286.24) .. controls (540.04,286.05) and (538.08,287.8) .. (537.89,290.16) .. controls (537.71,292.51) and (539.37,294.58) .. (541.6,294.77) -- cycle ;
\draw   (150.28,399.25) .. controls (150.36,403.91) and (152.73,406.2) .. (157.4,406.12) -- (172.21,405.85) .. controls (178.87,405.73) and (182.24,408) .. (182.33,412.67) .. controls (182.24,408) and (185.53,405.61) .. (192.2,405.49)(189.2,405.55) -- (207.01,405.23) .. controls (211.67,405.14) and (213.96,402.77) .. (213.88,398.1) ;
\draw   (239.92,400.38) .. controls (240.01,405.05) and (242.38,407.34) .. (247.05,407.26) -- (261.85,406.99) .. controls (268.52,406.87) and (271.89,409.14) .. (271.98,413.81) .. controls (271.89,409.14) and (275.18,406.75) .. (281.85,406.63)(278.85,406.69) -- (296.65,406.37) .. controls (301.32,406.28) and (303.61,403.91) .. (303.52,399.24) ;
\draw   (335.33,399.25) .. controls (335.42,403.91) and (337.79,406.2) .. (342.45,406.12) -- (357.26,405.85) .. controls (363.93,405.73) and (367.3,408) .. (367.38,412.67) .. controls (367.3,408) and (370.59,405.61) .. (377.25,405.49)(374.25,405.55) -- (392.06,405.23) .. controls (396.73,405.14) and (399.02,402.77) .. (398.93,398.1) ;
\draw   (160,453) .. controls (159.98,457.67) and (162.3,460.01) .. (166.97,460.03) -- (266.97,460.45) .. controls (273.64,460.48) and (276.96,462.82) .. (276.94,467.49) .. controls (276.96,462.82) and (280.3,460.5) .. (286.97,460.53)(283.97,460.52) -- (386.97,460.95) .. controls (391.64,460.97) and (393.98,458.65) .. (394,453.98) ;
\draw   (441,304) .. controls (440.96,308.67) and (443.27,311.02) .. (447.94,311.06) -- (487.44,311.41) .. controls (494.11,311.47) and (497.42,313.83) .. (497.38,318.5) .. controls (497.42,313.83) and (500.77,311.53) .. (507.44,311.59)(504.44,311.56) -- (546.94,311.94) .. controls (551.61,311.99) and (553.96,309.68) .. (554,305.01) ;
\draw    (267.64,292.81) -- (241.12,386.09) ;
\draw    (295.78,386.73) -- (271.64,292.81) ;

\draw (178.84,424.86) node [anchor=north west][inner sep=0.75pt]    {$s$};
\draw (269.57,424.86) node [anchor=north west][inner sep=0.75pt]    {$s$};
\draw (365.33,425.62) node [anchor=north west][inner sep=0.75pt]    {$s$};
\draw (270.01,476.43) node [anchor=north west][inner sep=0.75pt]    {$t$};
\draw (490.34,325.42) node [anchor=north west][inner sep=0.75pt]    {$s'$};
\end{tikzpicture}
\caption{$T_{s,t,s'}$}   \label{fig1}
\end{figure}

In what follows, we describe a family of graphs $H$, initially studied by Jiang, Jiang and Ma \cite{JJM2022}, satisfying the assumption of Theorem \ref{8/5+}. For three nonnegative integers $s,t,s'$, let $T_{s,t,s'}$ denote the tree defined in Figure \ref{fig1}. Define the $p$th power $T_{s,t,s'}^p$ of $T_{s,t,s'}$  by taking $p$ disjoint copies of $T_{s,t,s'}$ and identifying the different copies of the vertex $v$ for each $v$ in black as depicted in Figure \ref{fig1}. Jiang, Jiang and Ma \cite{JJM2022} first studied the Tur\'an exponent for this family of graphs. For convenience, we employ a refined version of theirs result, which established by Conlon and Janzer \cite{CJ2022}.
\begin{theorem}[Conlon and Janzer \cite{CJ2022}]\label{Turan-Exponent}
For any integers $s'\ge1$, $s\ge s'-1$, $t\ge s-s'+1$ and $p\ge1$, we have
\[
\mathrm{ex}(n,T^p_{s,t,s'})=O\left(n^{2-\frac{t+1}{st+t+s'}}\right).
\]
In particular, if $s'=0$, then we need $t\ge s+2$. Moreover, there exists $p\ge1$ such that
\[
\mathrm{ex}(n,T^p_{s,t,s'})=\Omega\left(n^{2-\frac{t+1}{st+t+s'}}\right).
\]
\end{theorem}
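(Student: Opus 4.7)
The plan is to handle the upper and lower bounds separately, following the Conlon--Janzer paradigm for Tur\'an exponents close to two.

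For the upper bound, suppose $G$ is an $n$-vertex graph with $e(G) \ge C\, n^{2-(t+1)/(st+t+s')}$ for a large constant $C$. First I would regularize, passing to an almost-regular bipartite subgraph $G'$ with classes $A,B$ of size $\Theta(n)$ and common degree $d = \Theta(n^{1-(t+1)/(st+t+s')})$; the usual Erd\H{o}s--Gallai/Erd\H{o}s--Simonovits weight-deletion argument does this. Then I would count rooted embeddings of $T_{s,t,s'}$: fix a root $v \in A$, select $t+s'$ neighbours in $B$, extend $s'$ of these to leaves in $A$, and extend each of the remaining $t$ to a middle vertex in $A$ carrying $s$ further leaves in $B$. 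Two nested applications of Jensen's inequality on the degree sequence yield on the order of $d^{\,st+t+s'}\, n$ such rooted copies, each tagged by its ordered \emph{leaf profile} consisting of the $st$ degree-two leaves in $B$ and the $s'$ leaves in $A$.

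The crucial \emph{balanced supersaturation} step is that these embeddings distribute almost uniformly across leaf profiles, so pigeonhole furnishes a single profile shared by at least $p$ rooted copies of $T_{s,t,s'}$; identifying them along the black leaves yields a copy of $T^p_{s,t,s'}$. The parameter conditions $s \ge s'-1$ and $t \ge s-s'+1$ (or $t \ge s+2$ when $s'=0$) enter here: they guarantee that the "middle" branches dominate over the "shortcut" branches in the convexity estimate so that the counting exponent is exactly $st+t+s'$, matching the denominator of the target exponent.

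For the matching lower bound, I would use a random algebraic construction in the Bukh--Conlon style. Choosing a prime power $q$ with $q \approx n^{1/(st+t+s')}$, form a bipartite graph on vertex sets $\mathbb{F}_q^{\,st+t+s'}$ and $\mathbb{F}_q^{\,t+1}$ with edges prescribed by a random low-degree polynomial system tuned to the branching profile of $T_{s,t,s'}$. Standard concentration shows the graph has $\Omega(n^{2-(t+1)/(st+t+s')})$ edges, while a first-moment calculation shows the expected number of copies of $T^p_{s,t,s'}$ is sublinear in the edge count once $p$ is large enough; deleting one edge from each copy produces the extremal example.

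The main obstacle is the supersaturation step: naive pigeonhole gives $p$ \emph{homomorphic} rather than injective rooted copies, and overlapping leaves would fail to produce an honest $T^p_{s,t,s'}$. Handling this requires (i) trimming codegree-heavy pairs at the regularization stage so that most counted embeddings are genuinely injective, and (ii) a careful weighted-counting refinement, as in Conlon--Janzer, to argue that the "good" (injective, non-degenerate) copies dominate the total count by a polynomial factor in $d$. Getting the exponent in the lower bound to line up precisely, rather than losing logarithmic or polynomial factors, is the corresponding delicate point in the random algebraic construction.
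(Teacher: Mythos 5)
This statement is not proved in the paper at all: Theorem \ref{Turan-Exponent} is quoted verbatim from Conlon and Janzer \cite{CJ2022} (building on Jiang, Jiang and Ma \cite{JJM2022} and the Bukh--Conlon random algebraic method) and is used purely as a black box to feed into Theorem \ref{8/5+}. So there is no internal proof to compare your attempt against; what you have written is a reconstruction of the external argument, and it should be judged on its own.

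As a roadmap your sketch identifies the correct architecture, and your arithmetic checks out: with $d=\Theta\bigl(n^{1-(t+1)/(st+t+s')}\bigr)$ one gets about $d^{\,st+t+s'}n=n^{st+s'}$ rooted embeddings, which exactly matches the number of possible leaf profiles of size $st+s'$, so above the threshold an averaging argument does force some profile to carry many copies. But the proposal has two genuine gaps, both of which you name yourself without closing. First, the ``balanced supersaturation'' step is not mere pigeonhole: the $p$ copies sharing a leaf profile must be internally disjoint and non-degenerate (no collisions among the root, the $t$ middle vertices, or between leaves of different branches), and controlling the degenerate/homomorphic contributions is precisely where the hypotheses $s\ge s'-1$, $t\ge s-s'+1$ (and $t\ge s+2$ when $s'=0$) are consumed; asserting that the middle branches ``dominate in the convexity estimate'' is the theorem, not a proof of it. This is the content of the light-versus-heavy copy analysis in \cite{CJ2022}, and without it the upper bound does not follow. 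Second, for the lower bound, the random algebraic construction only yields the exponent $2-1/\rho$ with $\rho=(st+t+s')/(t+1)$ when the tree $T_{s,t,s'}$, rooted at its black leaves, is \emph{balanced} in the Bukh--Conlon sense (every rooted subtree has density at most $\rho$); verifying this balancedness from the stated inequalities on $s,t,s'$ is a necessary computation that the sketch omits, and the first-moment bound on copies of $T^p$ in the random polynomial graph (via the Lang--Weil-type dimension count) is likewise only gestured at. In short: correct skeleton, but the two steps on which the theorem actually turns are left open, so this is a plan rather than a proof.
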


Now, we define the graphs $H$ and $B$ as follows. Let $B=T^p_{2,t,0}$ for each $t\in\{5,6,\ldots,11\}$. By Theorem \ref{Turan-Exponent}, there exists $p\ge1$ such that $\mathrm{ex}(n,B)=\Theta(n^{2-\epsilon})$, where
\[
\epsilon=\frac{t+1}{3t}\in[0.36,0.4].
\]
Fix such an integer $p$, and let $H=T^p_{2,t,1}$. Again, by Theorem \ref{Turan-Exponent}, we have
\[
\mathrm{ex}(n,H)=O\left(n^{2-\frac{t+1}{3t+1}}\right).
\]
It is easy to check that $\alpha=(t+1)/(3t+1)\ge(2-4\epsilon)/(3-4\epsilon)$ for each $5\le t\le11$, where the equality holds if and only if $t=11$.
\begin{corollary}
There exist an integer $p\ge1$ and a constant $\epsilon_t\ge0$ such that
\[
\mathrm{sp}(m,T^p_{2,t,1})=\Omega(m^{3/4+\epsilon_t})
\]
for each $t\in\{5,6,\ldots,11\}$, where $\epsilon_t=0$ if and only if $t=11$.
\end{corollary}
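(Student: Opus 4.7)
The strategy is to apply Theorem \ref{8/5+} with $H := T^p_{2,t,1}$ and $B := T^p_{2,t,0}$, for a power $p \ge 1$ to be chosen. By the definition of these graph powers, $B$ is bipartite (it arises from $p$ copies of a tree by identifying certain vertices in one bipartition class), and $H$ has a distinguished vertex whose removal yields $B$ (namely the vertex corresponding to $s'=1$). These are precisely the structural hypotheses required by Theorem \ref{8/5+}.

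I would determine the exponents $\epsilon$ and $\alpha$ by two applications of Theorem \ref{Turan-Exponent}. For $B$, taking $s=2$ and $s'=0$, every $t \in \{5,\ldots,11\}$ satisfies the required condition $t \ge s+2$. Hence there exists an integer $p \ge 1$, which we now fix, such that
\[
\mathrm{ex}(n, T^p_{2,t,0}) = \Theta\Bigl(n^{2-(t+1)/(3t)}\Bigr),
\]
so we set $\epsilon = (t+1)/(3t) \in [4/11,\, 2/5]$. For the same $p$, taking $s=2$ and $s'=1$, the conditions $s \ge s'-1$ and $t \ge s-s'+1$ in Theorem \ref{Turan-Exponent} are trivially verified, yielding
\[
\mathrm{ex}(n, T^p_{2,t,1}) = O\Bigl(n^{2-(t+1)/(3t+1)}\Bigr),
\]
so $\alpha = (t+1)/(3t+1)$. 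The key technical step is then to verify the hypothesis $\alpha > (2-4\epsilon)/(3-4\epsilon)$ of Theorem \ref{8/5+}. A direct computation gives $(2-4\epsilon)/(3-4\epsilon) = (2t-4)/(5t-4)$, whence
\[
\alpha - \frac{2-4\epsilon}{3-4\epsilon} = \frac{(t+1)(5t-4) - (2t-4)(3t+1)}{(3t+1)(5t-4)} = \frac{t(11-t)}{(3t+1)(5t-4)}.
\]
This is strictly positive for $t \in \{5,\ldots,10\}$, so Theorem \ref{8/5+} immediately produces a constant $\epsilon_t > 0$ with $\mathrm{sp}(m, T^p_{2,t,1}) = \Omega(m^{3/4+\epsilon_t})$.

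The main obstacle is the boundary case $t=11$, where the inequality above becomes an equality and Theorem \ref{8/5+} offers no slack. Here I would bypass Theorem \ref{8/5+} and invoke the bound \eqref{Chi2} of Theorem \ref{chi32} directly, with $\tau = \epsilon = 4/11$ and $\alpha = 6/17$. A short calculation gives the exponent
\[
\tau + \frac{1-\tau}{2-\alpha} = \frac{4}{11} + \frac{7}{11}\cdot\frac{17}{28} = \frac{4}{11} + \frac{17}{44} = \frac{3}{4},
\]
and therefore $\mathrm{sp}(m, T^p_{2,11,1}) = \Omega(m^{3/4})$, corresponding to $\epsilon_{11} = 0$. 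This matches the \emph{if and only if} condition in the corollary and completes the proof plan.
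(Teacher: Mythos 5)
Your proposal is correct and follows essentially the same route as the paper: both apply Theorem \ref{Turan-Exponent} to $B=T^p_{2,t,0}$ and $H=T^p_{2,t,1}$ to get $\epsilon=(t+1)/(3t)$ and $\alpha=(t+1)/(3t+1)$, and then feed these into Theorem \ref{8/5+} (equivalently \eqref{Chi2}); your computation $\alpha-(2-4\epsilon)/(3-4\epsilon)=t(11-t)/((3t+1)(5t-4))$ is the explicit form of the paper's ``easy to check'' claim. Your separate treatment of the boundary case $t=11$ via \eqref{Chi2} directly, giving exponent exactly $3/4$, is exactly what the paper's statement of equality at $t=11$ amounts to.
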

\noindent{\bf Remarks}. We mention that there also exist graphs $H$ and $B$ satisfying Theorem \ref{8/5+} by choosing other values of $s,t,s'$. Here, we do not list all of them.

\subsection{$K_{s,t}$-free graphs}
In this section, we focus on a special case of Conjecture \ref{General-Graph} and give a bound on $\mathrm{sp}(m,K_{s,t})$. Alon, Krivelevich and Sudakov \cite{AKS2005} suggested the following conjecture on $K_{s,t}$-free graphs.
\begin{conjecture}[Alon, Krivelevich and Sudakov \cite{AKS2005}]\label{Kst}
For all $t\ge s\ge2$ and all $m$, we have
\[
\mathrm{sp}(m,K_{s,t})=\Omega\left(m^{\frac34+\frac{1}{8s-4}}\right).
\]
\end{conjecture}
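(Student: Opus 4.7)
The plan is to deploy Theorem \ref{chi32} with $H = K_{s,t}$. Removing any vertex from the part of size $s$ in $K_{s,t}$ leaves the bipartite graph $B = K_{s-1,t}$, and the classical K\H{o}v\'ari--S\'os--Tur\'an theorem gives $\mathrm{ex}(n, K_{s-1,t}) = O(n^{2-1/(s-1)})$ and $\mathrm{ex}(n, K_{s,t}) = O(n^{2-1/s})$ for all $t \ge s \ge 2$. Setting $\epsilon = 1/(s-1)$ and $\alpha = 1/s$ in inequality \eqref{Chi2} yields
\[
\mathrm{sp}(m, K_{s,t}) = \Omega\bigl(m^{\tau + (1-\tau)/(2-1/s)}\bigr), \qquad \tau = \min\{1/(s-1),\, 1/2\}.
\]

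For $s = 2$ we have $\tau = 1/2$ and $\alpha = 1/2$, and the exponent becomes $5/6 = 3/4 + 1/12$. For $s = 3$ we have $\tau = 1/2$ and $\alpha = 1/3$, and the exponent becomes $4/5 = 3/4 + 1/20$. Both match the conjectured value $3/4 + 1/(8s-4)$ exactly, recovering the tight results of Alon, Krivelevich and Sudakov. For $s \ge 3$ in general, $\tau = 1/(s-1)$ and a short calculation reduces the exponent to $(s+1)/(2s-1)$.

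The main obstacle is the regime $s \ge 4$. Here $(s+1)/(2s-1) < 3/4$, so this direct route produces a bound strictly below the universal $3/4$ threshold of Conjecture \ref{General-Graph}, let alone the sharper $(3s-1)/(4s-2)$ of Conjecture \ref{Kst}. The obstruction is structural: the Tur\'an exponent $1 - 1/(s-1)$ of $K_{s-1,t}$ is too close to $1$, so $(c, 1/(s-1))$-sparseness is too weak a hypothesis to push the local term $\sum_v d(v)^\tau$ in Theorem \ref{more general} past $m^{3/4}$. To make progress I would either try to refine Theorem \ref{more general} by replacing the single parameter $\epsilon$ with a vertex-by-vertex control of the number of $K_{s-1,t}$-copies inside $G[N(v)]$ (a second-moment sharpening of the Zarankiewicz bound), or attempt to import the semidefinite-programming plus random-partition machinery of Glock, Janzer and Sudakov developed for $K_r$. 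Both avenues appear nontrivial, and Conjecture \ref{Kst} would remain open for every $s \ge 4$.
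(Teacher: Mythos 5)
The statement is a conjecture that the paper does not prove; it only records the best known partial progress toward it. Your derivation --- applying \eqref{Chi2} with $\tau=\min\{1/(s-1),1/2\}$ and $\alpha=1/s$ to obtain the exponent $(s+1)/(2s-1)=\tfrac12+\tfrac{3}{4s-2}$, which matches the conjectured $\tfrac34+\tfrac{1}{8s-4}$ exactly for $s\in\{2,3\}$ but drops below $3/4$ for $s\ge4$ --- is precisely the paper's Theorem \ref{main thm} and the surrounding discussion, so your conclusion that the conjecture remains open for every $s\ge4$ agrees with the paper.
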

Alon, Krivelevich and Sudakov \cite{AKS2005} confirmed this conjecture for $s\in\{2,3\}$. It remains wide open for any $s\ge4$. We remark that if Conjecture \ref{Kst} is true, then this is tight at least for all $t\ge(s-1)!+1$, as shown by the projective norm graphs. The conjecture is widely open. In fact  we  don't know whether there is an absolute constant $\alpha>1/2$ such that $\mathrm{sp}(m,K_{s,t})=\Omega(m^{\alpha})$.

A well-known result of K\H ov\'ari, S\'os and Tur\'an \cite{KST1954} asserts that any $n$-vertex $K_{s,t}$-free graph contains at most $O(n^{2-1/s})$ edges. This together with Theorem \ref{chi32} gives the current best known lower bound of Conjecture \ref{Kst} by setting $\tau=\min\{1/(s-1),1/2\}$ and $\alpha=1/s$ in \eqref{Chi2}.
\begin{theorem}\label{main thm}
For all $t\ge s\ge3$ and all $m$, we have
\begin{align*}
\mathrm{sp}(m,K_{s,t})=\Omega\left(m^{\frac 12+\frac 3{4s-2}}\right).
\end{align*}
In particular, $\mathrm{sp}(m,K_{2,t})=\Omega\left(m^{5/6}\right)$.
\end{theorem}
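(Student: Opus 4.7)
The plan is to derive Theorem~\ref{main thm} as a direct application of inequality \eqref{Chi2} in Theorem~\ref{chi32}, with the two Tur\'an-type exponents supplied by the K\H{o}v\'ari--S\'os--Tur\'an theorem.

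For $H=K_{s,t}$ with $s\ge 3$, I will choose as the bipartite graph $B$ the result of deleting a single vertex from the part of size $s$, so that $B=K_{s-1,t}$. By K\H{o}v\'ari--S\'os--Tur\'an, $\mathrm{ex}(n,K_{s-1,t})=O(n^{2-1/(s-1)})$, which makes $\epsilon=1/(s-1)$ admissible; since $s\ge 3$ forces $\epsilon\le 1/2$, we obtain $\tau=\min\{\epsilon,1/2\}=1/(s-1)$. The same theorem also gives $\mathrm{ex}(n,K_{s,t})=O(n^{2-1/s})$, so I may set $\alpha=1/s$ in \eqref{Chi2}.

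Plugging these values into \eqref{Chi2} and simplifying, the exponent becomes
\[
\tau+\frac{1-\tau}{2-\alpha}
=\frac{1}{s-1}+\frac{(s-2)/(s-1)}{(2s-1)/s}
=\frac{(2s-1)+s(s-2)}{(s-1)(2s-1)}
=\frac{s^{2}-1}{(s-1)(2s-1)}
=\frac{s+1}{2s-1},
\]
which is exactly $\tfrac12+\tfrac{3}{4s-2}$, yielding the desired bound for $s\ge 3$.

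For the residual case $s=2$, the analogous setup still works: removing a vertex from the part of size $2$ leaves the star $B=K_{1,t}$ with $\mathrm{ex}(n,K_{1,t})=O(n)$, so $\epsilon=1$ and $\tau=1/2$; meanwhile $\mathrm{ex}(n,K_{2,t})=O(n^{3/2})$ gives $\alpha=1/2$. Then \eqref{Chi2} produces exponent $\tfrac12+(1/2)/(3/2)=5/6$, establishing the final claim. No serious obstacle is anticipated, since the argument is purely a substitution into Theorem~\ref{chi32}; the only point that demands care is to delete a vertex from the \emph{smaller} side so that $B$ remains bipartite and the K\H{o}v\'ari--S\'os--Tur\'an bound is usable.
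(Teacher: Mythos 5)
Your proposal is correct and follows exactly the paper's own route: apply the K\H{o}v\'ari--S\'os--Tur\'an bounds to $B=K_{s-1,t}$ and to $H=K_{s,t}$, then substitute $\tau=\min\{1/(s-1),1/2\}$ and $\alpha=1/s$ into \eqref{Chi2} of Theorem~\ref{chi32}; your arithmetic yielding the exponent $\frac{s+1}{2s-1}=\frac12+\frac{3}{4s-2}$ and the $5/6$ exponent for $s=2$ checks out. (The only cosmetic quibble: deleting from the smaller side is needed not for bipartiteness---either deletion leaves a bipartite graph---but to get the larger admissible $\epsilon$.)
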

\noindent{\bf Remarks}. Our result matches Conjecture \ref{Kst} for $s\in\{2,3\}$. Coincidentally, our theorem also matches a recent result of  Glock, Janzer and Sudakov \cite{GJS2021} on $K_s$-free graphs, which states that $\mathrm{sp}(m,K_s)=\Omega(m^{1/2+3/(4s-2)})$ for all $m$. It would be interesting to get an improvement on both results, even just for the case $s=4$.


\section{Proofs of Theorems \ref{more general} and \ref{Sparse-Dense}}\label{Main-Result-Proof}
In this section, we prove Theorems \ref{more general} and \ref{Sparse-Dense}. To estimate the surplus of graphs via semidefinite programming, we need the following lemma essentially due to Goemans and Williamson \cite{GW1995}, whose short proof (see, e.g. \cite{GJS2021}) is included here for the sake of completeness.

\begin{lemma}[Glock, Janzer and Sudakov \cite{GJS2021}]\label{sdp lemma}
Let $N$ be a positive integer, and let $G$ be a graph.
Then, for any set of non-zero vectors $\{\mathbf x^v:v\in V(G)\}\subset\mathbb R^N$, we have
\[
\mathrm{sp}(G)\ge-\frac1{\pi}\sum_{uv\in E(G)}\arcsin\left(\frac{\langle\mathbf x^u,\mathbf x^v\rangle}{\|\mathbf x^u\|\|\mathbf x^v\|}\right).
\]
\end{lemma}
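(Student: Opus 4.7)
The plan is to establish this as the classical Goemans--Williamson random hyperplane rounding bound, which gives a probabilistic lower bound on the MaxCut via an explicit random cut.

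First, I would normalize by replacing each $\mathbf{x}^v$ with $\mathbf{x}^v/\|\mathbf{x}^v\|$; since the inequality only involves the normalized inner products $\langle \mathbf{x}^u,\mathbf{x}^v\rangle/(\|\mathbf{x}^u\|\|\mathbf{x}^v\|)$, we may assume $\|\mathbf{x}^v\|=1$ for every $v\in V(G)$. Next, I would sample a uniformly random unit vector $\mathbf{r}\in S^{N-1}\subset\mathbb{R}^N$ (equivalently, a standard Gaussian vector rescaled), and define the random cut
\[
A=\{v\in V(G):\langle \mathbf{r},\mathbf{x}^v\rangle\ge 0\},\qquad B=V(G)\setminus A.
\]

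The main computation is the classical fact that for two unit vectors $\mathbf{x}^u,\mathbf{x}^v$, a uniformly random hyperplane through the origin separates them with probability equal to the angle between them divided by $\pi$. Concretely, letting $\theta_{uv}=\arccos(\langle \mathbf{x}^u,\mathbf{x}^v\rangle)\in[0,\pi]$, the edge $uv$ is a cut edge of $(A,B)$ with probability $\theta_{uv}/\pi$. This is a two-dimensional calculation: the event depends only on the projection of $\mathbf{r}$ onto the plane spanned by $\mathbf{x}^u,\mathbf{x}^v$, and this projection has rotationally invariant direction in that plane, so the probability that $\mathbf{r}$ lies in the ``separating wedge'' is exactly $2\theta_{uv}/(2\pi)=\theta_{uv}/\pi$.

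Summing over all edges and using linearity of expectation, the expected size of the random cut is
\[
\mathbb{E}\bigl[|E(A,B)|\bigr]=\sum_{uv\in E(G)}\frac{\theta_{uv}}{\pi}=\frac{m}{2}+\sum_{uv\in E(G)}\Bigl(\frac{\theta_{uv}}{\pi}-\frac{1}{2}\Bigr).
\]
Using the identity $\arccos(t)-\pi/2=-\arcsin(t)$, the bracketed term is $-\tfrac{1}{\pi}\arcsin(\langle \mathbf{x}^u,\mathbf{x}^v\rangle)$. Since $\mathrm{mc}(G)\ge \mathbb{E}[|E(A,B)|]$ (some realization achieves at least the mean), rearranging $\mathrm{sp}(G)=\mathrm{mc}(G)-m/2$ yields the claimed inequality.

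There is no real obstacle here beyond the geometric probability computation in the plane spanned by $\mathbf{x}^u$ and $\mathbf{x}^v$; the only mild point to be careful about is the measure-zero event $\langle \mathbf{r},\mathbf{x}^v\rangle=0$, which can be handled by breaking ties arbitrarily, and the degenerate case $\mathbf{x}^u=\pm\mathbf{x}^v$ (where $\theta_{uv}\in\{0,\pi\}$), for which the formula still gives the correct probability $0$ or $1$.
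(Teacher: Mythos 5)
Your proposal is correct and follows essentially the same argument as the paper: a uniformly random hyperplane cut, the $\theta_{uv}/\pi$ separation probability, the identity $\arccos(t)=\tfrac{\pi}{2}-\arcsin(t)$, and linearity of expectation. The extra care you take with normalization and the measure-zero tie-breaking is fine but not needed beyond what the paper already records.
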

\begin{proof}[\bf Proof.]
    Let $\mathbf{z}$ be a uniformly random unit vector in $\mathbb{R}^N$, let $A=\{v\in V(G):\langle\mathbf x^v,\mathbf z\rangle\ge0\}$ and $B=V(G)\backslash A$. Given an edge $uv\in E(G)$, the angle between $\mathbf x^v$ and $\mathbf x^u$ is $\arccos\left({\frac{\langle\mathbf x^v,\mathbf x^u\rangle}{\|\mathbf x^u\|\|\mathbf x^v\|}}\right)$. Thus, the probability that $uv$ lies in the cut $(A,B)$ is
    \begin{align*}
        \mathbb P(\text{$uv$ is a  cut edge of $(A,B)$})=\frac{1}{\pi}\arccos\left({\frac{\langle\mathbf x^v,\mathbf x^u\rangle}{\|\mathbf x^u\|\|\mathbf x^v\|}}\right)=\frac12-\frac1\pi \arcsin\left({\frac{\langle\mathbf x^v,\mathbf x^u\rangle}{\|\mathbf x^u\|\|\mathbf x^v\|}}\right).
    \end{align*}
    By the linearity of expectation, we conclude that the expected size of the cut $(A,B)$ is at least $$\frac{e(G)}2-\frac1\pi \sum_{uv\in E(G)}\arcsin\left({\frac{\langle\mathbf x^v,\mathbf x^u\rangle}{\|\mathbf x^u\|\|\mathbf x^v\|}}\right),$$
    from which the result follows.
\end{proof}

The following result on regular graphs is available to show the tightness of Theorem \ref{more general}. See, e.g., \cite{A1996, ABKS2003} for a proof.
\begin{lemma}\label{Eigen}
Let $G$ be an regular graph with $n$ vertices, and let $\lambda$ be the smallest eigenvalue of (the adjacency matrix of) $G$. Then
\[
\mathrm{sp}(G)\le-\lambda n/4.
\]
\end{lemma}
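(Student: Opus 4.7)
The plan is to translate the combinatorial surplus into a quadratic form in the adjacency matrix of $G$ and then apply the Rayleigh quotient. Let $M$ denote the adjacency matrix of $G$, and encode a bipartition $(A,B)$ of $V(G)$ by the sign vector $\mathbf{x}\in\{-1,+1\}^n$ with $\mathbf{x}_v=+1$ if $v\in A$ and $\mathbf{x}_v=-1$ otherwise. Writing $e^-$ for the number of cut edges of $(A,B)$, each non-cut edge contributes $+1$ to the sum $\sum_{uv\in E(G)}\mathbf{x}_u\mathbf{x}_v$ and each cut edge contributes $-1$, so a direct expansion gives
\[
\mathbf{x}^T M \mathbf{x} \;=\; 2\sum_{uv\in E(G)}\mathbf{x}_u\mathbf{x}_v \;=\; 2m - 4e^-.
\]

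Taking the maximum over $\mathbf{x}\in\{-1,+1\}^n$ then yields the clean identity
\[
\mathrm{sp}(G) \;=\; \mathrm{mc}(G)-\frac{m}{2} \;=\; -\frac{1}{4}\min_{\mathbf{x}\in\{-1,+1\}^n}\mathbf{x}^T M \mathbf{x}.
\]
To control the right-hand side I would invoke the Rayleigh quotient: since every $\mathbf{x}\in\{-1,+1\}^n$ satisfies $\|\mathbf{x}\|^2 = n$, the minimality of $\lambda$ on the spectrum of $M$ gives
\[
\mathbf{x}^T M \mathbf{x} \;\ge\; \lambda\|\mathbf{x}\|^2 \;=\; \lambda n,
\]
and substituting this into the previous identity produces $\mathrm{sp}(G)\le -\lambda n/4$, as required.

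There is essentially no technical obstacle here; the argument is elementary and spectral, and in fact the bound $\mathrm{sp}(G)\le -\lambda n/4$ holds without any regularity assumption on $G$. The hypothesis of regularity is presumably stated because this lemma is intended to be applied in tandem with Theorem \ref{more general} to $d$-regular graphs $G$ whose least eigenvalue satisfies $\lambda = -\Theta(\sqrt d)$; in that regime $-\lambda n/4$ is of order $\sqrt d\cdot n = \sum_{v\in V(G)}\sqrt{d_G(v)}$, precisely matching the $\sum_v d_G(v)^{\tau}$ term on the lower-bound side with $\tau=1/2$ and confirming the tightness of Theorem \ref{more general} in the claimed range of $\epsilon$.
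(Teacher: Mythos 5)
Your proof is correct and is precisely the standard spectral argument that the paper itself points to rather than reproving (it cites \cite{A1996, ABKS2003}): encode a cut by a $\pm1$ vector, derive the identity $\mathrm{sp}(G)=-\tfrac14\min_{\mathbf{x}\in\{-1,+1\}^n}\mathbf{x}^{T}M\mathbf{x}$, and apply the Rayleigh bound $\mathbf{x}^{T}M\mathbf{x}\ge\lambda\|\mathbf{x}\|^{2}=\lambda n$. You are also right that regularity plays no role in the inequality; the only slip is in your closing aside, since in the paper's tightness construction the least eigenvalue is $-k=-\Theta(d^{\epsilon})$ with $\epsilon\in(0,1/3]$ (so the relevant matching is with $\sum_v d_G(v)^{\tau}=nd^{\epsilon}$, not with $\tau=1/2$), but this does not affect the proof of the lemma itself.
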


We also use the following two folklore lemmas.
\begin{lemma}\label{Degenerate}
Let $G$ be a $d$-degenerate graph on $n$ vertices. Then there is an ordering $v_1,\ldots,v_n$ of the vertices of $G$ such that for every $1\leq i\leq n$ the vertex $v_i$ has at most $d$ neighbours $v_j$ with $j<i$.
\end{lemma}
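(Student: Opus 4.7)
The plan is to construct the desired ordering by a greedy ``peeling'' procedure carried out in reverse, assigning the indices $n, n-1, \ldots, 1$ in that order. The key point is that the $d$-degeneracy hypothesis is exactly what keeps the procedure going: every induced subgraph of $G$ contains a vertex of degree at most $d$, which can be peeled off at the current step. Concretely, I would proceed by reverse induction on $i$ from $n$ down to $1$. Applying $d$-degeneracy to $G$ itself produces a vertex $u \in V(G)$ with $d_G(u) \le d$, which I set as $v_n$. Having chosen $v_{i+1}, \ldots, v_n$, the remaining induced subgraph $G[V(G) \setminus \{v_{i+1}, \ldots, v_n\}]$ is again $d$-degenerate, since any subgraph of it is a subgraph of $G$, so it contains a vertex of degree at most $d$ within it, which I name $v_i$.

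To verify the required property, it suffices to observe that by construction $v_i$ lies in the induced subgraph $G[\{v_1, \ldots, v_i\}]$ and has degree at most $d$ inside this subgraph. Since the neighbours of $v_i$ in $G[\{v_1, \ldots, v_i\}]$ are precisely those $v_j$ with $j < i$ that are adjacent to $v_i$ in $G$, there are at most $d$ such $v_j$, which is the desired conclusion. The argument is a direct unpacking of the definition of $d$-degeneracy through a greedy peeling process, and there is no real obstacle to overcome.
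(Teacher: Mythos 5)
Your proof is correct and is the standard peeling argument for this folklore fact; the paper itself states the lemma without proof, and your reverse-induction construction (repeatedly extracting a vertex of degree at most $d$ from the remaining induced subgraph, which stays $d$-degenerate) is exactly the canonical argument one would supply. Nothing is missing.
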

\begin{lemma}\label{folklore}
Every graph with average degree $d$ has a subgraph with minimum degree at least $d/2$.
\end{lemma}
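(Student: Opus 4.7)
The plan is to use the classical iterative peeling argument. I would set $H_0 := G$ and, while the current graph $H_i$ contains a vertex $v$ with $d_{H_i}(v) < d/2$, define $H_{i+1} := H_i - v$. Any graph at which the process halts has minimum degree at least $d/2$ by construction, so the substantive content of the lemma is that this process must halt at a non-empty subgraph (the case $d=0$ is vacuous, so I may assume $d > 0$).

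To force a non-empty halting point, I would monitor the potential
\[
\rho(H) \;:=\; 2e(H) - d\,|V(H)|.
\]
The hypothesis on the average degree of $G$ gives $\rho(G) = 2e(G) - d\,|V(G)| \geq 0$. A single peeling step removes a vertex $v$ satisfying $d_{H_i}(v) < d/2$, which decreases $2e$ by exactly $2 d_{H_i}(v)$ and decreases $d|V|$ by $d$, so
\[
\rho(H_{i+1}) - \rho(H_i) \;=\; d - 2 d_{H_i}(v) \;>\; 0.
\]
Thus $\rho$ is strictly increasing along the process and in particular stays non-negative throughout. However, any single-vertex graph has $\rho = -d < 0$, so the process cannot reach a single-vertex graph, and \emph{a fortiori} cannot empty out $G$. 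The process must therefore terminate at a subgraph on at least two vertices, which by the stopping rule has minimum degree at least $d/2$.

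The argument is entirely routine and presents no real obstacle; the only point that requires any care is the sign computation for the change of $\rho$ under a peeling step, which is immediate from $d_{H_i}(v) < d/2$.
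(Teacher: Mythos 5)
Your argument is correct: the potential $\rho(H)=2e(H)-d\,|V(H)|$ is non-negative for $G$, strictly increases under each deletion of a vertex of degree less than $d/2$, and is negative on a single vertex, so the peeling process halts at a non-empty subgraph of minimum degree at least $d/2$. The paper states this as a folklore lemma without proof, and your argument is exactly the standard one it implicitly relies on.
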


The next well-known lemma asserts that it suffices to find a subgraph with relatively large surplus in order to show that a graph has large surplus. See, e.g., \cite{A1996, AKS2005} for a proof.
\begin{lemma}\label{Induced-Sub}
Let $G$ be a graph and $U\subseteq V(G)$. Then
\[
\mathrm{sp}(G)\ge\mathrm{sp}(G[U]).
\]
\end{lemma}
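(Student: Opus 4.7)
The plan is to start from an optimal cut of the induced subgraph $G[U]$ and extend it to a cut of all of $G$ in a way that does not lose surplus, by handling the vertices of $V(G) \setminus U$ through a random (or equivalently greedy) assignment.

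Concretely, let $(A', B')$ be a maximum cut of $G[U]$, so its size is $\mathrm{mc}(G[U])$. I will build a cut $(A, B)$ of $G$ with $A \supseteq A'$ and $B \supseteq B'$ by placing each vertex $v \in V(G) \setminus U$ independently and uniformly at random into $A$ or $B$. Every edge of $G$ falls into exactly one of two types: either both endpoints lie in $U$ (in which case its contribution to the cut is fixed by $(A', B')$), or at least one endpoint lies outside $U$ (in which case, by the independent uniform randomness on the outside vertices, the edge is a cut edge with probability exactly $1/2$).

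Taking expectations, the expected size of $(A,B)$ equals
\[
\mathrm{mc}(G[U]) + \tfrac{1}{2}\bigl(e(G) - e(G[U])\bigr).
\]
Hence there exists a realization with cut size at least this value, so
\[
\mathrm{mc}(G) \;\geq\; \mathrm{mc}(G[U]) + \tfrac{1}{2}\bigl(e(G) - e(G[U])\bigr).
\]
Subtracting $e(G)/2$ from both sides gives $\mathrm{sp}(G) \geq \mathrm{mc}(G[U]) - e(G[U])/2 = \mathrm{sp}(G[U])$, which is the desired inequality.

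There is essentially no obstacle here: the argument is a one-line probabilistic extension of a max cut, and the randomness handles the bookkeeping for edges incident to $V(G)\setminus U$ automatically. A greedy (derandomized) placement, assigning each outside vertex to whichever side cuts at least half of its already-placed incident edges, would yield the same bound and could be used interchangeably.
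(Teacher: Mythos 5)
Your argument is correct and is exactly the standard proof of this folklore fact: extend a maximum cut of $G[U]$ by placing the remaining vertices uniformly at random, so every edge with an endpoint outside $U$ is cut with probability $1/2$, giving $\mathrm{mc}(G)\ge \mathrm{mc}(G[U])+\tfrac12\bigl(e(G)-e(G[U])\bigr)$ and hence the surplus inequality. The paper itself omits the proof and only cites references, but the argument given there is the same random (or greedy) extension you describe, so there is nothing to add.
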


Now we are in a position to bound the surplus of graphs with sparse neighborhoods.
\begin{proof}[\bf{Proof of Theorem \ref{more general}}]
Let $G$ be a $(c,\epsilon)$-sparse graph defined on the vertex set  $[n]:=\{1,\ldots,n\}$.  For every vertex $i\in [n]$, let $d_i$ denote the degree of $i$. For each edge $ij\in E(G)$, let $d_{ij}=|N(i)\cap N(j)|$. Now we define a vector $\mathbf x^i=(x^i_1,x^i_2,\ldots,x^i_n)\in\mathbb R^n$ for every vertex $i\in [n]$ by taking
\begin{align*}
x_j^i=
\begin{cases}
1+\rho d_i^{\tau}/n, &\text{if $j=i$},\\
-\rho d_i^{\tau-1}, &\text{if $j\in N(i)$},\\
\rho d_i^{\tau}/n, &\text{otherwise},
\end{cases}
\end{align*}
where $\tau:=\tau(\epsilon)=\min\{\epsilon,1/2\}$ and $\rho:=\rho(c)=\min\{c/32,1/(32c)\}\le1/32$. To employ Lemma \ref{sdp lemma}, we aim to bound $\|\mathbf x^i\|$ for each $i\in [n]$ and $\langle\mathbf x^i,\mathbf x^j\rangle$ for each edge $ij\in E(G)$.

\begin{claim}\label{norm}
For each $i\in [n]$, we have
\begin{align*}
1\le\|\mathbf x^i\|\le\sqrt2.
\end{align*}
\end{claim}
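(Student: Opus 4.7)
The plan is to compute $\|\mathbf{x}^i\|^2$ directly by splitting the sum $\sum_j (x^i_j)^2$ according to the three cases in the definition of $\mathbf{x}^i$: the single coordinate $j=i$, the $d_i$ coordinates $j \in N(i)$, and the remaining $n - 1 - d_i$ coordinates. This gives
\[
\|\mathbf{x}^i\|^2 = \left(1 + \frac{\rho d_i^\tau}{n}\right)^2 + d_i \cdot \left(\rho d_i^{\tau - 1}\right)^2 + (n - 1 - d_i) \cdot \left(\frac{\rho d_i^\tau}{n}\right)^2,
\]
after which the lower bound $\|\mathbf{x}^i\| \ge 1$ is immediate: the first summand is already at least $1$ and the other two are non-negative.

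For the upper bound I would rely on three inputs built into the setup: $\tau \le 1/2$ (so the exponent $2\tau - 1$ is non-positive and $d_i^{2\tau-1} \le 1$ whenever $d_i \ge 1$); $d_i \le n$ together with $\tau \le 1$ (so $d_i^\tau / n \le 1$); and $\rho \le 1/32$. The middle term simplifies to $\rho^2 d_i^{2\tau - 1}$ and is therefore at most $\rho^2$; the last term is at most $n \cdot (\rho d_i^\tau / n)^2 \le \rho^2$; and the first is at most $(1 + \rho)^2$. Summing yields $\|\mathbf{x}^i\|^2 \le 1 + 2\rho + 3\rho^2$, which sits comfortably below $2$ for any $\rho \le 1/32$, so $\|\mathbf{x}^i\| \le \sqrt{2}$.

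The one technical hiccup I anticipate is isolated vertices. When $d_i = 0$, the expression $d_i^{\tau - 1}$ nominally has a singularity, but the corresponding group of coordinates is empty and $\mathbf{x}^i$ has all coordinates equal to $\rho d_i^\tau / n = 0$ outside the $j = i$ position, so $\|\mathbf{x}^i\| = 1$ trivially. Outside this edge case, the claim is a short bookkeeping exercise whose only genuine content is the observation that the choice $\tau \le 1/2$ was made precisely so that the neighbor contribution $\rho^2 d_i^{2\tau - 1}$ stays bounded no matter how small $d_i$ is; the rest of the estimates are slack, and the constant $1/32$ is used only because it makes the arithmetic cosmetic.
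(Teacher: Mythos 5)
Your proof is correct and follows essentially the same route as the paper: expand $\|\mathbf x^i\|^2$ over the three coordinate types, note the first summand already gives the lower bound $1$, and use $\tau\le 1/2$, $d_i\le n$ and $\rho\le 1/32$ to push the total below $2$. The paper merely groups the terms as $1+\tfrac{2\rho d_i^{\tau}}{n}+\rho^2 d_i^{2\tau-1}\bigl(1+\tfrac{d_i}{n}-\tfrac{d_i^2}{n^2}\bigr)$ before estimating, whereas you bound the three pieces separately to get $1+2\rho+3\rho^2$; both are routine and your handling of the $d_i=0$ edge case is a harmless extra precaution.
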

\begin{proof}
For each $i\in [n]$, it follows from the definition of $\mathbf x^i$ that
\begin{align*}
\|\mathbf x^i\|&=\sqrt{\left(1+\frac{\rho d_i^{\tau}}{n}\right)^2+d_i\left(-\rho d_i^{\tau-1}\right)^2+(n-d_i-1)\left(\frac{\rho d_i^{\tau}}{n}\right)^2}\\
&=\sqrt{1+\frac{2\rho d_i^{\tau}}{n}+\rho^2d_i^{2\tau-1}\left(1+\frac{d_i}{n}-\frac{d_i^2}{n^2}\right)}.
\end{align*}
Clearly, $\|\mathbf x^i\|\ge1$. Since $d_i\le n$ and $\tau\le1/2$, we have
\[
d_i^{2\tau-1}\le1 \;\:\text{and}\;\: 1+\frac{d_i}{n}-\frac{d_i^2}{n^2}\le2.
\]
This together with the choice of $\rho$ yields that
\begin{align*}
\|\mathbf x^i\|\le\sqrt{1+2\rho^2}\le\sqrt2,
\end{align*}
completing the proof of Claim \ref{norm}.
\end{proof}
\begin{claim}\label{inner product}
For each edge $ij\in E(G)$, we have
\begin{align*}
\langle\mathbf x^i,\mathbf x^j\rangle\le-\rho\left(d_i^{\tau-1}+d_j^{\tau-1}\right)-\frac{\rho^2(d_id_j)^\tau}{n}+4\rho^2(d_i d_j)^{\tau-1}d_{ij}.
\end{align*}
\end{claim}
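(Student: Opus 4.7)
The plan is to expand $\langle\mathbf x^i,\mathbf x^j\rangle=\sum_{k\in[n]}x^i_kx^j_k$ by partitioning $[n]$ into six classes according to the position of $k$ relative to $\{i,j\}\cup N(i)\cup N(j)$. Since $ij\in E(G)$ forces $j\in N(i)$ and $i\in N(j)$, the classes are $\{i\}$, $\{j\}$, $N(i)\cap N(j)$ (size $d_{ij}$), $N(i)\setminus(N(j)\cup\{j\})$ (size $d_i-d_{ij}-1$), $N(j)\setminus(N(i)\cup\{i\})$ (size $d_j-d_{ij}-1$), and the remainder $[n]\setminus(N(i)\cup N(j)\cup\{i,j\})$ (size $n-d_i-d_j+d_{ij}$). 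On each class the entries $x^i_k$ and $x^j_k$ are constants read off from the three-case definition of $\mathbf x^i$, so the inner product becomes a sum of six explicit contributions.

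Two features of this expansion make the rest transparent. First, only the singletons $\{i\}$ and $\{j\}$ produce terms linear in $\rho$, and together they contribute exactly $-\rho(d_i^{\tau-1}+d_j^{\tau-1})$, matching the first term of the claim. Second, I would sort the remaining $\rho^2$-terms by whether they contain the factor $d_{ij}$ or not. Among the $d_{ij}$-free $\rho^2$-terms, the ``mixed'' contributions $\pm\rho^2 d_i^{\tau-1}d_j^{\tau}/n$ and $\pm\rho^2 d_i^\tau d_j^{\tau-1}/n$ cancel in pairs between the singletons and the punctured-neighbourhood classes, while the three surviving terms $-\rho^2 d_i^\tau d_j^\tau/n$ (one from each of $N(i)\setminus(N(j)\cup\{j\})$ and $N(j)\setminus(N(i)\cup\{i\})$) together with $+\rho^2(d_id_j)^\tau/n$ (from the remainder) collapse to exactly $-\rho^2(d_id_j)^\tau/n$, matching the second term of the claim. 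A harmless leftover $-\rho^2(d_id_j)^\tau(d_i+d_j)/n^2$ is non-positive and may be discarded.

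Finally, the $d_{ij}$-terms, which arise from the common neighbourhood, from the two punctured-neighbourhood classes, and from the remainder, will factor as
\[
\rho^2 d_{ij}\,(d_id_j)^{\tau-1}\Bigl(1+\tfrac{d_i}{n}\Bigr)\Bigl(1+\tfrac{d_j}{n}\Bigr),
\]
and since $d_i,d_j\le n$ each factor is at most $2$, this is bounded by $4\rho^2(d_id_j)^{\tau-1}d_{ij}$, giving the third term of the claim. The main obstacle is purely bookkeeping: the six class sizes (the $-1$'s matter), the signs, and the cancellations must all line up precisely, since the cancellations are exactly what produce the clean $-\rho^2(d_id_j)^\tau/n$ term. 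The one piece of genuine content is recognizing the factorisation $(1+d_i/n)(1+d_j/n)$ of the $d_{ij}$-contributions; once spotted, the constant $4$ is immediate from $d_i,d_j\le n$.
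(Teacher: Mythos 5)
Your proposal is correct and follows essentially the same route as the paper: the same six-way partition of $[n]$ (which the paper merely groups into two sets $S_1$ and $S_2$), the same class sizes $d_i-d_{ij}-1$, $d_j-d_{ij}-1$ and $n-d_i-d_j+d_{ij}$, the same cancellations yielding $-\rho(d_i^{\tau-1}+d_j^{\tau-1})-\rho^2(d_id_j)^\tau/n$, and the identical factorisation of the $d_{ij}$-terms as $\rho^2 d_{ij}(d_id_j)^{\tau-1}\bigl(1+\tfrac{d_i}{n}\bigr)\bigl(1+\tfrac{d_j}{n}\bigr)\le 4\rho^2(d_id_j)^{\tau-1}d_{ij}$. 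No gaps.
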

\begin{proof}
For each edge $ij\in E(G)$, let $S_1=(N(i)\cup N(j))\setminus(N(i)\cap N (j))$ and $S_2=[n]\setminus S_1$. Clearly, we have
\begin{align}\label{S12}
\langle\mathbf x^i,\mathbf x^j\rangle&=\sum_{k\in[n]}x_k^ix_k^j=\sum_{k\in S_1}x_k^ix_k^j+\sum_{k\in S_2}x_k^ix_k^j.
\end{align}
Now, we bound them one by one.

For each edge $ij\in E(G)$, by the definition of $\mathbf x^i$, it is easy to see that
\[
x_i^ix_i^j=-\left(1+\frac{\rho d_i^{\tau}}n\right)\rho d_j^{\tau-1}  \;\;\text{and}\;\;  x_j^ix_j^j=-\rho d_i^{\tau-1}\left(1+\frac{\rho d_j^{\tau}}n\right).
\]
Moreover, we have $x_k^ix_k^j=-\rho^2{d_i^{\tau-1}d_j^{\tau}}/n$ for each $k\in N(i)\setminus N[j]$, and $x_k^ix_k^j=-\rho^2{d_i^{\tau}d_j^{\tau-1}}/n$ for each $k\in N(j)\setminus N[i]$. Note that $|N(i)\setminus N[j]|=d_i-d_{ij}-1$ and $|N(j)\setminus N[i]|=d_j-d_{ij}-1$. It follows that
\begin{align}\label{S1}
\sum_{k\in S_1}x_k^ix_k^j=&\left(\sum_{k\in \{i,j\}}+\sum_{k\in N(i)\setminus N[j]}+\sum_{k\in N(j)\setminus N[i]}\right)x_k^ix_k^j\notag\\
=&-\left(1+\frac{\rho d_i^{\tau}}n\right)\rho d_j^{\tau-1}-\rho d_i^{\tau-1}\left(1+\frac{\rho d_j^{\tau}}n\right)\notag\\
&+(d_i-d_{ij}-1)\left(-\frac {\rho^2 d_i^{\tau-1}d_j^{\tau}}n\right)+(d_j-d_{ij}-1)\left(-\frac {\rho^2 d_i^{\tau}d_j^{\tau-1}}n\right)\notag\\
=&-\rho d_i^{\tau-1}-\rho d_j^{\tau-1}-\frac{\rho^2(d_id_j)^\tau}{n}\left(2-\frac{d_{ij}}{d_i}-\frac{d_{ij}}{d_j}\right).
\end{align}
We also have $x_k^ix_k^j=\rho^2(d_id_j)^{\tau-1}$ for each $k\in N(i)\cap N (j)$, and $x_k^ix_k^j=\rho^2{(d_id_j)^\tau}/{n^2}$ for each $k\in [n]\setminus (N(i)\cup N(j))$. Observe that $|N(i)\cap N(j)|=d_{ij}$ and $|[n]\setminus(N(i)\cup N(j))|=n-d_i-d_j+d_{ij}$. Hence
\begin{align}\label{S2}
\sum_{k\in S_2}x_k^ix_k^j&=\left(\sum_{k\in N(i)\cap N(j)}+\sum_{k\in [n]\setminus (N(i)\cup N(j))}\right)x_k^ix_k^j\notag\\
&=\rho^2(d_id_j)^{\tau-1}d_{ij}+(n-d_i-d_j+d_{ij})\frac{\rho^2(d_id_j)^\tau}{n^2}\notag\\
&=\rho^2(d_id_j)^\tau\left(\frac{d_{ij}}{d_id_j}+\frac{d_{ij}}{n^2}+\frac{n-d_i-d_j}{n^2}\right).
\end{align}
Combining equalities \eqref{S12}, \eqref{S1} and \eqref{S2}, we conclude that
\begin{align*}
\langle\mathbf x^i,\mathbf x^j\rangle
&=-\rho d_i^{\tau-1}-\rho d_j^{\tau-1}+\rho^2(d_id_j)^\tau\left(d_{ij}\left(\frac{1}{n}+\frac1{d_i}\right)\left(\frac{1}{n}+\frac1{d_j}\right)-\frac{n+d_i+d_j}{n^2}\right)\\
&\le-\rho\left(d_i^{\tau-1}+d_j^{\tau-1}\right)-\frac{\rho^2(d_id_j)^\tau}{n}+4\rho^2(d_i d_j)^{\tau-1}d_{ij},
\end{align*}
where the last inequality follows from $d_i\le n$ for each $i\in[n]$. This completes the proof of Claim \ref{inner product}.
\end{proof}

For any $x\in[-1,1]$ with $x\le b-a$ for some $a,b\ge0$, we have
\[
\arcsin(x)\le\frac{\pi}2b-a.
\]
In fact, if $x < 0$, then $\arcsin(x)\le x\le b-a\le\frac{\pi}2b-a$; and if $x\ge0$, then $\arcsin(x)\le\frac{\pi}2x\le\frac{\pi}2(b-a)\le\frac{\pi}2b-a$. Recall that $1\le\|\mathbf x^i\|\le\sqrt2$ for each $i\in[n]$ by Claim \ref{norm}. These together with Claim \ref{inner product} imply that
\begin{align*}
\arcsin\left(\frac{\langle\mathbf x^i,\mathbf x^j\rangle}{\|\mathbf x^i\|\|\mathbf x^j\|}\right)&\le-\frac{\rho\left(d_i^{\tau-1}+d_j^{\tau-1}\right)}{\|\mathbf x^i\|\|\mathbf x^j\|}-\frac{\rho^2(d_id_j)^\tau}{n\|\mathbf x^i\|\|\mathbf x^j\|}+\frac{2\pi \rho^2(d_i d_j)^{\tau-1}d_{ij}}{\|\mathbf x^i\|\|\mathbf x^j\|}\\
&\le-\frac{\rho}{2}\left(d_i^{\tau-1}+d_j^{\tau-1}\right)-\frac{\rho^2(d_id_j)^\tau}{2n}+2\pi \rho^2(d_i d_j)^{\tau-1}d_{ij}
\end{align*}
for every edge $ij\in E(G)$. It follows from Lemma \ref{sdp lemma} that
\begin{align}\label{goal inequality}
\mathrm{sp}(G)&\ge-\frac1{\pi}\sum_{ij\in E(G)}\arcsin\left(\frac{\langle\mathbf x^i,\mathbf x^j\rangle}{\|\mathbf x^i\|\|\mathbf x^j\|}\right)\notag\\
&\ge\frac{\rho}{8}\sum_{ij\in E(G)}\left(d_i^{\tau-1}+d_j^{\tau-1}\right)+\sum_{ij\in E(G)}\frac{\rho^2(d_id_j)^\tau}{8n}-2\rho^2\sum_{ij\in E(G)}(d_i d_j)^{\tau-1}d_{ij}\notag\\
&=\frac{\rho}{8}\sum_{i\in[n]}d_i^{\tau}+\sum_{ij\in E(G)}\frac{\rho^2(d_id_j)^\tau}{8n}-2\rho^2\sum_{ij\in E(G)}(d_i d_j)^{\tau-1}d_{ij}.
\end{align}

In what follows, it suffices to bound $\sum_{ij\in E(G)}(d_i d_j)^{\tau-1}d_{ij}$.
\begin{claim}\label{sparse neighborhood}
\begin{align*}
\sum_{ij\in E(G)}(d_i d_j)^{\tau-1}d_{ij}\le c\sum_{i\in[n]}d_i^{\tau}.
\end{align*}
\end{claim}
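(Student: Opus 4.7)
The plan is to establish the claim by a double-counting argument that applies the $(c,\epsilon)$-sparsity hypothesis one neighborhood at a time. The obstacle is that the weight $(d_id_j)^{\tau-1}$ couples both endpoints of each edge, which prevents a clean swap of summation. I would bypass this with the AM-GM inequality
\[
(d_id_j)^{\tau-1}=\sqrt{d_i^{2\tau-2}\,d_j^{2\tau-2}}\le\tfrac{1}{2}\bigl(d_i^{2\tau-2}+d_j^{2\tau-2}\bigr),
\]
which reduces the target quantity to $\tfrac12\sum_{ij\in E(G)}d_{ij}\bigl(d_i^{2\tau-2}+d_j^{2\tau-2}\bigr)$.

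Next I would pivot the sum on one endpoint at a time. Using the identity $d_{uv}=|N(u)\cap N(v)|$, a standard double count yields
\[
\sum_{u\in N(v)}d_{uv}=2\,e(G[N(v)])
\]
for each vertex $v$, since each edge of $G[N(v)]$ corresponds to a triangle through $v$ and is counted from each of its two endpoints. Regrouping the edge sum accordingly,
\[
\sum_{ij\in E(G)}d_{ij}\bigl(d_i^{2\tau-2}+d_j^{2\tau-2}\bigr)=\sum_{v\in V(G)}d_v^{2\tau-2}\cdot 2\,e(G[N(v)]),
\]
to which the $(c,\epsilon)$-sparsity bound $e(G[N(v)])\le c\,d_v^{2-\epsilon}$ can be applied directly.

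Putting the steps together, the right-hand side is at most $2c\sum_v d_v^{2\tau-\epsilon}$, and the factor $1/2$ from AM-GM cancels the factor $2$ from $2e(G[N(v)])$ to give $c\sum_v d_v^{2\tau-\epsilon}$ overall. Finally, since $\tau=\min\{\epsilon,1/2\}\le\epsilon$, we have $2\tau-\epsilon\le\tau$, so $d_v^{2\tau-\epsilon}\le d_v^{\tau}$ for every vertex $v$ of positive degree (isolated vertices contribute nothing), which yields the claimed bound $c\sum_v d_v^{\tau}$. The only nontrivial move is the AM-GM step; the rest is bookkeeping, and the inequality $\tau\le\epsilon$ built into the definition of $\tau$ is precisely what makes the exponents line up.
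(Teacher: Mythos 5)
Your proof is correct and follows essentially the same route as the paper: the same AM-GM split of $(d_id_j)^{\tau-1}$, the same regrouping of the edge sum via $\sum_{u\in N(v)}d_{uv}=2e(G[N(v)])$, and the same use of the sparsity hypothesis together with $\tau\le\epsilon$. The only cosmetic difference is that you apply the exponent inequality at the end ($d_v^{2\tau-\epsilon}\le d_v^\tau$) whereas the paper applies it inside the neighborhood bound ($d_i^{2-\epsilon}\le d_i^{2-\tau}$); these are the same step.
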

\begin{proof}
Note that $d_i^{2\tau-2}+d_j^{2\tau-2}\ge2(d_i d_j)^{\tau-1}$ for each edge $ij\in E(G)$. This implies that
\begin{align*}
\sum_{ij\in E(G)}(d_i d_j)^{\tau-1}d_{ij}&\le\sum_{ij\in E(G)}\frac12\left(d_i^{2\tau-2}+d_j^{2\tau-2}\right)d_{ij}\\
&=\frac12\sum_{i\in[n]} \left(d_i^{2\tau-2}\sum_{j\in N(i)}d_{ij}\right).
\end{align*}
Recall that the induced subgraph on all the $d_i$ neighbors of vertex number $i$ contains at most $cd_i^{2-\epsilon}$ edges for each $i\in[n]$ by our assumption. Since the degree of the vertex $j\in N(i)$ in the subgraph $G[N(i)]$ is exact $d_{ij}$, we conclude that
\[
\sum_{j\in N(i)}d_{ij}=2e(G[N(i)])\le2cd_i^{2-\epsilon}\le 2cd_i^{2-\tau}.
\]
Hence
\begin{align*}
\sum_{ij\in E(G)}(d_i d_j)^{\tau-1}d_{ij}\le\frac12\sum_{i\in[n]}\left(d_i^{2\tau-2}\sum_{j\in N(i)}d_{ij}\right)\le c\sum_{i\in[n]} d_i^{\tau},
\end{align*}
completing the proof of Claim \ref{sparse neighborhood}.
\end{proof}

Combining the inequality \eqref{goal inequality} and Claim \ref{sparse neighborhood}, we deduce that
\begin{align*}
\mathrm{sp}(G)&\ge\left(\frac{\rho}{8}-2\rho^2c\right)\sum_{i\in[n]}d_i^{\tau}+\sum_{ij\in E(G)}\frac{\rho^2(d_id_j)^\tau}{8n}\\
&\ge\frac{\rho}{16}\sum_{i\in[n]}d_i^{\tau}+\sum_{ij\in E(G)}\frac{\rho^2(d_id_j)^\tau}{8n}.
\end{align*}
The last inequality holds in view of $\rho c\le1/32$ by the choice of $\rho$. Thus, we complete the lower bound of Theorem \ref{more general} by setting $\delta_1=\rho/16$ and $\delta_2=\rho^2/8$.

In what follows, we show that the inequality \eqref{main-them-spG} is tight up to the values of $\delta_1$ and $\delta_2$ for each $\epsilon\in[0,1/3]$. Clearly, this is true for $\epsilon=0$ by Edwards' bound. We may assume that $\epsilon\in(0,1/3]$. Consider the following graph $G=(V,E)$ constructed by Delsarte and Goethals, and by Turyn (see \cite{KS2006}). Let $q$ be a prime power and let $V$ be the elements of the two-dimensional vector space over $GF(q)$. This means that $G$ contains $n=q^2$ vertices. Partition the $q+1$ lines through the origin of the space into two sets $P$ and $N$, where $|P| = k$. Two vertices $x$ and $y$ of $G$ are adjacent if and only if $x-y$ is parallel to a line in $P$. Note that $G$ is $d=k(q-1)$-regular, and its smallest eigenvalue is $-k$. It follows from Lemma \ref{Eigen} that
\begin{align}\label{Eigen-Upper}
\mathrm{sp}(G)=O(kq^2).
\end{align}
For each $\epsilon\in(0,1/3]$, choose $k=\Theta(q^{\epsilon/(1-\epsilon)})$. This implies that
\begin{align}\label{d-regular}
d=\Theta(kq)=\Theta(k^{1/\epsilon})=\Theta(q^{1/(1-\epsilon)}).
\end{align}
Moreover, $G$ is actually a strongly regular graph and the number of common neighbours of each pair of adjacent vertices of $G$ is exactly $q-2+(k-1)(k-2)$. It is easy to deduce that the induced subgraph $G_v$ on all the $d$ neighbors of the vertex $v\in V$ has
\begin{align}\label{Gv}
e(G_v)=\Theta(d(q+k^2))= \Theta(d^{2-\epsilon}+d^{1+2\epsilon})=\Theta(d^{2-\epsilon})
\end{align}
edges. Hence, we conclude that $G$ is $(c,\epsilon)$-sparse for some positive constant $c$. In view of \eqref{main-them-spG} and \eqref{d-regular}, we have
\begin{align}\label{large-surplus}
\mathrm{sp}(G)=\Omega\left(nd^\epsilon+d^{1+2\epsilon}\right)=\Omega\left(kq^{2}+kq^{(1+\epsilon)/(1-\epsilon)}\right)=\Omega(kq^2).
\end{align}
This together with \eqref{Eigen-Upper} yields that $\mathrm{sp}(G)=\Theta(kq^2)$, completing the proof of Theorem \ref{more general}.
\end{proof}

\begin{proof}[{\bf Proof of Theorem \ref{Sparse-Dense}}]
$(\mathrm{i})$ Since $G$ is $d$-degenerate, it follows from Lemma \ref{Degenerate} that there exists a labelling $v_1,\ldots,v_n$ of the vertices of $G$ such that $d^+_i\leq d$ for every $i$, where $d^+_i$ denotes the number of neighbors $v_j$ of $v_i$ with $j<i$. Clearly, $\sum_{i=1}^{n}d^+_i=m$. By Theorem \ref{more general}, there exists a constant $\delta_1>0$ such that
\begin{eqnarray*}
\mathrm{sp}(G)\ge\delta_1\sum_{i=1}^{n}d_i^\tau\geq\delta_1\sum_{i=1}^{n}(d_{i}^{+})^\tau \geq\frac{\delta_1\sum_{i=1}^{n} d_{i}^{+}}{d^{1-\tau}}=\delta_1\;\frac{m}{d^{1-\tau}}.
\end{eqnarray*}
This completes the first claim of Theorem \ref{Sparse-Dense}.

$(\mathrm{ii})$ Since $G$ has average degree $d$, it follows from Lemma \ref{folklore} that there exists a subgraph $G'$ of $G$ with minimum degree at least $d/2$. By Theorem \ref{more general} and Lemma \ref{Induced-Sub}, there exists a constant $\delta_2>0$ such that
\begin{align*}
\mathrm{sp}(G)\ge\mathrm{sp}(G')&\ge\delta_2\sum_{uv\in E(G')}\frac{(d_{G'}(u)d_{G'}(v))^\tau}{|V(G')|}\\
&\ge\frac12\delta_2\left(\frac d2\right)^{1+2\tau}=\frac{\delta_2}{4^{1+\tau}}\;d^{1+2\tau}.
\end{align*}
Thus, we complete the second claim of Theorem \ref{Sparse-Dense}.
\end{proof}

\section{Concluding remarks}\label{SEC:Remarks}
Extended a result of Alon, Krivelevich and Sudakov \cite{AKS2005}, we give a lower bound for surplus of graphs with sparse neighborhoods using  the semidefinite programming method. It can get many old and new results about the MaxCut of $H$-free graphs if $H$  is a graph that has a vertex whose removal results in a bipartite graph.  New ideas should be involved to bound the surplus of $K_r$-free graphs when $r\ge 4$.  The first nontrivial result on this topic was given by Zeng and Hou \cite{ZH2017}, who proved that $\mathrm{sp}(m, K_r)=\Omega(m^{(r-1)/(2r-3)+o(1)})$. This was improved to $\Omega(m^{(r+1)/(2r-1)})$ by Glock, Janzer and Sudakov \cite{GJS2021}. As noted by Glock, Janzer and Sudakov \cite{GJS2021}, any improvement on it, even just beating the exponent $5/7$ in the $r=4$ case, would be interesting. Another interesting  open problem is to decide whether there exists a positive absolute constant $\epsilon$ such that any $K_r$-free graph with $m$ edges has surplus $\Omega(m^{1/2+\epsilon})$.

An easy observation \cite{A1996} shows that   $\mathrm{sp}(G)\ge m/(2\chi)$ for any graph $G$ with $m$ edges and chromatic number $\chi$. Thus, a fundamental method in studying MaxCut is to find an induced subgraph with many edges and small chromatic number in view of Lemma \ref{Induced-Sub}. It has been used widely to study the MaxCut of graphs without specific cycles \cite{A1996,ABKS2003,AKS2005,ZH2017,ZH2018}. The second term on the right hand of \eqref{main-them-spG} in Theorem \ref{more general} plays the same role. Thus, the results listed in Section \ref{Applications} do not have tedious structural analysis. We believe that this will be helpful to study the MaxCut of other $H$-free graphs.

We can get almost all known results on MaxCut of $H$-free graphs through Theorem \ref{chi32}, except $H$ is an odd cycle. For any odd $r\ge 3$, Glock, Janzer and Sudakov \cite{GJS2021} (see also \cite{BJS2023}) proved that $\mathrm{sp}(m,C_r)=\Theta(m^{(r+1)/(r+2)})$, confirming a conjecture posed by Alon, Krivelevich and Sudakov \cite{AKS2005}. Their proof presents many new ideas, which together with Theorem \ref{more general} will certainly help us for further research in this area. 

Recall that Theorem \ref{more general} is tight for $\epsilon\in[0,1/3]$. 
If $\epsilon>1/3$, then it follows from \eqref{Gv} and \eqref{large-surplus} that $e(G_v)=\Theta(d^{1+2\epsilon})=\omega(d^{2-\epsilon})$ and
\[
nd^\epsilon+d^{1+2\epsilon}=\Omega\left(kq^{(1+\epsilon)/(1-\epsilon)}\right)=\omega(kq^2)=\omega(\mathrm{sp}(G)).
\]
This means that if the induced subgraph on all the $d$ neighbors of some vertex contains more than $\omega(d^{2-\epsilon})$ edges, then the assertion of Theorem \ref{more general} may cease to hold. It is interesting to decide whether Theorem \ref{more general} is tight for $\epsilon>1/3$.

\section*{Appendix}
In this appendix, we give an alternative proof of the lower bound \eqref{Chi2} concerning the surplus in a graph. In fact, it suffices to prove the following theorem according to Theorem \ref{more general}.
\begin{theorem}\label{Sum}
Let $G$ be a graph with $m$ edges. Suppose that there exist two constants $c>0$ and $1\le\alpha\le2$ such that $e(S)\le c|S|^{\alpha}$ for any $S\subseteq V(G)$. Then for each $\tau\in(0,1)$
\[
\sum_{i\in V(G)}d_G(v)^\tau=\Omega\left(m^{\tau+\frac{1-\tau}{\alpha}}\right).
\]
\end{theorem}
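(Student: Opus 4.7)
The plan is a direct case analysis based on a cutoff $L$ chosen so that the target exponent $\tau + (1-\tau)/\alpha$ emerges naturally. I would set $L := (m/c)^{(\alpha-1)/\alpha}$ and partition the vertex set as $V(G) = H \cup \Lambda$ with $H = \{v : d_G(v) > L\}$ and $\Lambda = \{v : d_G(v) \le L\}$. Since $\sum_v d_G(v) = 2m$, at least one of the sums $\sum_{v \in H} d_G(v)$ and $\sum_{v \in \Lambda} d_G(v)$ is at least $m$, which drives the split.

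In the low-degree case $\sum_\Lambda d_G(v) \ge m$, I would use the elementary inequality $d_G(v)^{\tau-1} \ge L^{\tau-1}$ (valid because $\tau - 1 < 0$ and $0 < d_G(v) \le L$), which gives $d_G(v)^\tau \ge L^{\tau-1}\, d_G(v)$ for every $v \in \Lambda$ (the bound is trivial when $d_G(v) = 0$). Summing yields $\sum_v d_G(v)^\tau \ge L^{\tau-1}\sum_\Lambda d_G(v) \ge L^{\tau-1} m$, and a direct computation shows that $L^{\tau-1} m = c^{(\alpha-1)(1-\tau)/\alpha}\, m^{\tau + (1-\tau)/\alpha}$, which is the desired bound.

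In the high-degree case $\sum_H d_G(v) \ge m$, I would further split on $|H|$. If $|H| \ge (m/(4c))^{1/\alpha}$, then $\sum_{v \in H} d_G(v)^\tau > |H|\cdot L^\tau$, and plugging in $L$ again yields a constant multiple of $m^{\tau + (1-\tau)/\alpha}$. Otherwise $|H| < (m/(4c))^{1/\alpha}$, and the density hypothesis forces $e(G[H]) \le c|H|^\alpha < m/4$; hence $e_G(H,\Lambda) = \sum_{v \in H} d_G(v) - 2\,e(G[H]) \ge m - m/2 = m/2$. Since every edge of $E(H,\Lambda)$ contributes $1$ to the $G$-degree of its $\Lambda$-endpoint, we get $\sum_\Lambda d_G(v) \ge e_G(H,\Lambda) \ge m/2$, which reduces us to the low-degree case with the constant worsened only by a factor of $1/2$.

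I do not anticipate any substantial obstacle; the argument is a clean three-way case split, and the main point is the calibration of the threshold $L = (m/c)^{(\alpha-1)/\alpha}$, which is chosen precisely so that $mL^{\tau-1}$, $(m/(4c))^{1/\alpha}\cdot L^\tau$ and the target $m^{\tau+(1-\tau)/\alpha}$ all coincide up to $c$-dependent constants. The hypothesis $1 \le \alpha \le 2$ plays no actual role in the proof (it works unchanged for any $\alpha \ge 1$).
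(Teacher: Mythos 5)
Your proof is correct. It rests on the same underlying dichotomy as the paper's --- the density hypothesis applied to the set of high-degree vertices shows they cannot absorb too many edges, so either there are many of them (each contributing at least $L^{\tau}$) or the low-degree vertices carry degree mass $\Omega(m)$ (and $d^{\tau}\ge L^{\tau-1}d$ there) --- but the execution is genuinely different. The paper truncates by rank: it takes the top $t=\Theta((m/c)^{1/\alpha})$ vertices, notes the remaining ones have total degree at least $m-ct^{\alpha}$, and combines the two contributions $tx^{\tau}+(m-ct^{\alpha})x^{\tau-1}$ via Young's inequality, which makes the unknown threshold degree $x=d_t$ cancel. You instead fix an explicit degree threshold $L=(m/c)^{(\alpha-1)/\alpha}$ and replace the Young step by a three-way case split, using the density hypothesis only in the sub-case where $H$ is small to push the degree mass down to $\Lambda$ via $e_G(H,\Lambda)=\sum_{v\in H}d_G(v)-2e(G[H])\ge m/2$. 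Your route is more elementary (no optimization step) and, as you note, dispenses with the upper restriction on $\alpha$: the paper uses $\alpha\le 2$ only to guarantee $t<n$ via $m\le cn^{\alpha}$, a step your threshold-based split never needs. All your exponent calibrations check out ($L^{\tau-1}m$ and $(m/(4c))^{1/\alpha}L^{\tau}$ both equal $c$-dependent constants times $m^{\tau+(1-\tau)/\alpha}$), and the handling of isolated vertices and of the inequality $\sum_{v\in\Lambda}d_G(v)\ge e_G(H,\Lambda)$ is sound.
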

To attack this theorem, we employ the famous Young's inequality as follows.
\begin{lemma}[Young's Inequality]\label{Young}
Let $a,b,p,q$ be four positive real numbers. If $p>1$ and
\[
\frac1p+\frac1q=1,
\]
then
\[
\frac{a}{p}+\frac{b}{q}\ge a^{1/p}b^{1/q},
\]
with equality holds if and only if $a=b$.
\end{lemma}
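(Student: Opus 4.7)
The plan is to prove Theorem \ref{Sum} by a case analysis based on the degeneracy of $G$, balancing two complementary bounds through the choice of a threshold $D := m^{(\alpha-1)/\alpha}$. Young's inequality (Lemma \ref{Young}) enters through the elementary consequence that for any real $0 < x \le D$ one has $x^{\tau} \cdot D^{1-\tau} \ge x$, equivalently $x^{\tau} \ge x / D^{1-\tau}$; this is the mechanism that converts an $\ell_1$ bound on back-degrees into an $\ell_{\tau}$ bound on the full sum.

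In the first case, I suppose $G$ is $D$-degenerate. By Lemma \ref{Degenerate} there is an ordering $v_1, \ldots, v_n$ of $V(G)$ in which each $v_i$ has at most $D$ earlier neighbors; writing $d_i^{-}$ for this back-degree, we have $\sum_{i=1}^n d_i^{-} = m$ and $d_G(v_i) \ge d_i^{-}$. Applying the Young-type inequality above to each term gives
$$\sum_{v \in V(G)} d_G(v)^{\tau} \;\ge\; \sum_{i=1}^n (d_i^{-})^{\tau} \;\ge\; \frac{1}{D^{1-\tau}} \sum_{i=1}^n d_i^{-} \;=\; \frac{m}{D^{1-\tau}} \;=\; m^{\tau + (1-\tau)/\alpha},$$
where the last equality is a direct computation using $D = m^{(\alpha-1)/\alpha}$.

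In the second case, $G$ is not $D$-degenerate, so it contains a subgraph $G'$ with minimum degree strictly greater than $D$. Combining $2 e(G') \ge D |V(G')|$ with the hypothesis $e(G') \le c |V(G')|^{\alpha}$ forces $|V(G')| \ge (D/(2c))^{1/(\alpha-1)}$, and then
$$\sum_{v \in V(G)} d_G(v)^{\tau} \;\ge\; \sum_{v \in V(G')} d_{G'}(v)^{\tau} \;\ge\; |V(G')| \cdot D^{\tau} \;=\; \Omega\!\left( D^{\tau + 1/(\alpha-1)} \right),$$
and the exponent of $D$ has been arranged so that this again evaluates to $\Omega(m^{\tau + (1-\tau)/\alpha})$.

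The main obstacle is identifying the correct balance parameter; once $D$ is chosen to make the bounds $m/D^{1-\tau}$ and $D^{\tau + 1/(\alpha-1)}$ carry the same exponent of $m$, the two cases fit together with no further work. The one subtlety is the boundary value $\alpha = 1$, where the second-case exponent $1/(\alpha-1)$ blows up; however, in that regime the hypothesis $e(S) \le c|S|$ forces $G$ to be $2c$-degenerate, so Case~1 with the constant choice $D = 2c$ already gives $\sum_{v} d_G(v)^{\tau} = \Omega(m) = \Omega(m^{\tau + (1-\tau)/\alpha})$, and Case~2 is not needed.
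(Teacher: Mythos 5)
There is a fundamental mismatch here: the statement you were asked to prove is Lemma \ref{Young} itself, i.e.\ that for positive reals $a,b$ and conjugate exponents $p,q$ with $\frac1p+\frac1q=1$, $p>1$, one has $\frac{a}{p}+\frac{b}{q}\ge a^{1/p}b^{1/q}$ with equality if and only if $a=b$. Your proposal never establishes this inequality. Instead it is an argument for Theorem \ref{Sum}, and the only ``Young-type'' fact you actually use is that $x^{\tau}\ge x/D^{1-\tau}$ for $0<x\le D$, which is nothing more than the monotonicity of $t\mapsto t^{\tau-1}$ (a decreasing function since $\tau<1$); it carries none of the content of Young's inequality, and in particular the two-variable inequality and its equality case $a=b$ are nowhere derived. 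A correct proof of the lemma is a two-line application of the strict concavity of the logarithm (equivalently, weighted AM--GM): $\log\bigl(\tfrac{a}{p}+\tfrac{b}{q}\bigr)\ge\tfrac1p\log a+\tfrac1q\log b=\log\bigl(a^{1/p}b^{1/q}\bigr)$, with equality precisely when $a=b$ because $\log$ is strictly concave. The paper itself states the lemma without proof, as a classical fact, but that does not make a proof of a different theorem an acceptable substitute.

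As a secondary remark: viewed as a proof of Theorem \ref{Sum}, your degeneracy-threshold argument (splitting on whether $G$ is $m^{(\alpha-1)/\alpha}$-degenerate, and using $e(G')\le c|V(G')|^{\alpha}$ to force a large dense subgraph in the non-degenerate case) does appear to be sound and is genuinely different from the paper's appendix proof, which orders the degrees, truncates at a threshold $t$, and optimizes via Young's inequality in its full two-variable form. Your route avoids Young's inequality altogether, which is precisely why it cannot serve as a proof of Lemma \ref{Young}.
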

\begin{proof}[\bf Proof of Theorem \ref{Sum}]
Let $V(G)=\{v_1,\ldots,v_n\}$ and $d_i=d_G(v_i)$ for each $i\in [n]$. We may assume that $d_1\geq d_2\geq \ldots \geq d_n$. Let $H$ be the subgraph induced by $\{v_1 \dots v_t\}$, where $t<n$ will be chosen later. Clearly, $e(H)\le ct^\alpha$ by our assumption. This implies that $\sum_{i > t} d_i \geq m-ct^{\alpha}$. Set $d_t = x$. It follows that
\begin{align*}
\sum_{i=1}^n d_i^{\tau}=\sum_{i\le t} d_i^{\tau}+ \sum_{i>t} d_i^{\tau}&\ge tx^{\tau}+\frac{\sum_{i>t} d_i}{x^{1-\tau}}\\
&\ge tx^{\tau}+\frac{m-c t^\alpha}{x^{1-\tau}}\\
&\ge\left(\frac{tx^\tau}{1-\tau}\right)^{1-\tau}\left(\frac{m-c t^{\alpha}}{\tau x^{1-\tau}}\right)^{\tau},
\end{align*}
where the last inequality holds due to Lemma \ref{Young} by setting $p=1/(1-\tau)$, $q=1/\tau$, $a=tx^\tau/(1-\tau)$ and $b=(m-c t^{\alpha})/(\tau x^{1-\tau})$. Thus, we conclude that
\begin{align*}
\sum_{i=1}^n d_i^{\tau}\ge Ct^{1-\tau}(m-c t^\alpha)^{\tau},
\end{align*}
where $C^{-1}=(1-\tau)^{1-\tau}\tau^\tau$. Choose $t=\left\lfloor\left(\frac{m}{2c}\right)^{1/\alpha}\right\rfloor$. Note that $m\le cn^{\alpha}$ by our assumption. This means that $t\le n/2^{1/\alpha}<n$ as $\alpha\in [1,2]$. Moreover,
\[
t^{1-\tau}(m-c t^\alpha)^{\tau} \ge\left(\frac{1}{2c}-o(1)\right)^\frac{1-\tau}{\alpha}\frac{1}{2^\tau} m^{\tau+\frac{1-\tau}{\alpha}},
\]
implying that
\[
\sum_{i\in [n]}d_i^\tau=\Omega\left(m^{\tau+\frac{1-\tau}{\alpha}}\right).
\]
This completes the proof of Theorem \ref{Sum}.
\end{proof}
\end{document}